\documentclass[english]{amsart}

\usepackage{esint}
\usepackage[svgnames]{xcolor} 
\usepackage[colorlinks,citecolor=red,pagebackref,hypertexnames=false,breaklinks]{hyperref}
\usepackage{pgf,tikz}
\usepackage{pdfsync}

\usepackage{dsfont}
\usepackage{url}
\usepackage[utf8]{inputenc}
\usepackage[T1]{fontenc}
\usepackage{lmodern}
\usepackage{babel}
\usepackage{mathtools}  
\usepackage{amssymb}
\usepackage{lipsum}
\usepackage{mathrsfs}
\usepackage{color}
\usepackage[skip=2.2pt plus 1pt, indent=12pt]{parskip}

\newtheorem{theorem}{Theorem}[section]
\newtheorem{proposition}{Proposition}[section]
\newtheorem{lemma}{Lemma}[section]

\numberwithin{equation}{section}

\title[]{Determination of separable perturbations of an unbounded potential in the two-dimensional Schr\"odinger equation
}

\author{Mourad Choulli}
\address{Universit\'{e} de Lorraine, 34 cours L\'{e}opold, 54052 Nancy cedex, France}
\email{mourad.choulli@univ-lorraine.fr}

\author{Hiroshi Takase}
\address{Department of Mathematics, Okayama University, 3-1-1 Tsushima-naka, Kita-ku, Okayama 700-8530, Japan}
\email{takase@math.okayama-u.ac.jp}

\thanks{This work was supported by JSPS KAKENHI Grant Numbers JP25K17280 and JP23KK0049.}

\date{}

\begin{document}
\begin{abstract}
We establish uniqueness and stability results for a class of perturbations of an unbounded potential in the two-dimensional Schrödinger equation, from the corresponding Dirichlet-to-Neumann map. We assume that the difference between the potentials has a separated product structure. Our proof relies on a specific Carleman inequality.
\end{abstract}

\subjclass[2020]{35R30, 35J10}

\keywords{Two-dimensional Schr\"odinger equation, unbounded potential, uniqueness, stability, CGO solutions.}

\maketitle

\tableofcontents

\section{Introduction}\label{s1}

Let $\Omega$ be a $C^{0,1}$-bounded domain of $\mathbb{R}^2$ with boundary $\Gamma$, and $\alpha>1$. Unless otherwise stated, all functions we consider are complex-valued.

For $V\in L^\alpha (\Omega,\mathbb{R})$, we define the operator $A_V:H_0^1(\Omega)\rightarrow H^{-1}(\Omega)$ by
\[
\langle A_Vu,v\rangle = \int_\Omega [\nabla u\cdot \nabla \overline{v}+Vu\overline{v}]dx,\quad u,v\in H^1_0(\Omega),
\]
where $\langle \cdot ,\cdot\rangle$ is the duality pairing between $H_0^1(\Omega)$ and $H^{-1}(\Omega)$. We proceed as in \cite[Subsection 1.3]{Ch2} to show that $A_V$ is well defined and its spectrum, denoted by $\sigma(A_V)$, is discrete. We set $L_\ast ^\alpha (\Omega):=\{V\in L^\alpha(\Omega,\mathbb{R}),\; 0\not\in \sigma(A_V)\}$.

For $V\in L_\ast ^\alpha(\Omega)$ and $f\in H^{\frac{1}{2}}(\Gamma)$, consider the BVP
\begin{equation}\label{bvp1}
(-\Delta +V)u=0\; \mbox{in}\; \Omega,\quad u_{|\Gamma}=f.
\end{equation}
In light of the fact that $H^1(\Omega)$ is continuously embedded in $L^{\frac{4\alpha}{2\alpha+1}}(\Omega)$, we verify that \cite[Lemma 1.1 and Theorem 1.2]{Ch2} still hold for $n=2$ provided we replace $V\in L^{\frac{n}{2}}(\Omega,\mathbb{R})$ by $V\in L^\alpha (\Omega,\mathbb{R})$. Consequently, \eqref{bvp1} admits a unique solution $u_V(f)\in H^1(\Omega)$ satisfying
\begin{equation}\label{bou1}
\|u_V(f)\|_{H^1(\Omega)}\le \mathbf{c}_0\left(1+\|V\|_{L^\alpha (\Omega)}\right)\|f\|_{H^{\frac{1}{2}}(\Gamma)},
\end{equation}
where $\mathbf{c}_0=\mathbf{c}_0(\Omega,\mathrm{dist}(0,\sigma(A_V))) >0$ is a constant.

For $V\in L^\alpha(\Omega,\mathbb{R})$, let
\[
\mathscr{S}_V:=\{ u\in H^1(\Omega );\; (-\Delta +V)u=0\; \mbox{in}\; \Omega\}.
\]

The following lemma gives the definition of the normal derivative for functions in $\mathscr{S}_V$. It extends the usual normal derivative for functions in $H^2(\Omega)$.

\begin{lemma}\label{lemma-nor}
Let $V\in L^\alpha(\Omega,\mathbb{R})$, $u\in \mathscr{S}_V$ and define 
\[
\partial_\nu u : f\in H^{\frac{1}{2}}(\Gamma )\mapsto \partial_\nu u(f):=\int_\Omega \left(\nabla u\cdot \nabla \overline{F}+Vu \overline{F}\right)dx,
\]
where $F\in H^1(\Omega )$ is arbitrary such that $F_{|\Gamma}=f$. Then $\partial_\nu u$ is well defined, belongs to $H^{-1/2}(\Gamma )$, and the following inequality holds:
\begin{equation}\label{p6}
\|\partial_\nu u\|_{H^{-1/2}(\Gamma )}\le \mathbf{c}_1 \left(1+\|V\|_{L^\alpha(\Omega)}\right) \|u\|_{H^1(\Omega )},
\end{equation}
where $\mathbf{c}_1=\mathbf{c}_1(\Omega) >0$ is a constant.
\end{lemma}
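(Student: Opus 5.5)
We verify three things in turn: the definition does not depend on the extension $F$, the resulting map is a bounded antilinear functional on $H^{\frac12}(\Gamma)$, and its norm satisfies \eqref{p6}.

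\emph{Independence of the extension.} Let $F_1,F_2\in H^1(\Omega)$ both have trace $f$. Then $G:=F_1-F_2\in H_0^1(\Omega)$, since $\Omega$ is Lipschitz and the kernel of the trace map is $H_0^1(\Omega)$. Because $u\in\mathscr{S}_V$, the equation $(-\Delta+V)u=0$ holds in the sense of distributions. The difference of the two candidate values is
\[
\int_\Omega \left(\nabla u\cdot\nabla\overline{G}+Vu\overline{G}\right)dx .
\]
It vanishes for $G\in C_0^\infty(\Omega)$. Both terms of the integral are continuous in $G$ for the $H^1$ norm (see the bound below), so by density it vanishes for all $G\in H_0^1(\Omega)$. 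Hence $\partial_\nu u(f)$ is well defined. Antilinearity in $f$ then follows by choosing extensions linearly, for instance through a bounded right inverse of the trace.

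\emph{The potential term.} This is the only delicate step, since $V$ is merely $L^\alpha$ with $\alpha>1$. In two dimensions $H^1(\Omega)\hookrightarrow L^p(\Omega)$ for every $p<\infty$, with a constant depending only on $\Omega$ and $p$. Set $p:=2\alpha'$, where $\alpha'=\alpha/(\alpha-1)$. By H\"older's inequality with exponents $\alpha,\,2\alpha',\,2\alpha'$,
\[
\left|\int_\Omega Vu\overline{F}\,dx\right|\le \|V\|_{L^\alpha(\Omega)}\|u\|_{L^{2\alpha'}(\Omega)}\|F\|_{L^{2\alpha'}(\Omega)}\le C\|V\|_{L^\alpha(\Omega)}\|u\|_{H^1(\Omega)}\|F\|_{H^1(\Omega)}.
\]
This is consistent with the embedding $H^1(\Omega)\hookrightarrow L^{\frac{4\alpha}{2\alpha+1}}(\Omega)$ mentioned earlier in the paper. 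The constant $C$ depends on $\alpha$ as well as $\Omega$. The statement writes $\mathbf{c}_1=\mathbf{c}_1(\Omega)$, so we read $\alpha$ as fixed data. The gradient term is bounded directly by Cauchy--Schwarz: it is at most $\|u\|_{H^1(\Omega)}\|F\|_{H^1(\Omega)}$.

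\emph{Conclusion.} Choose $F=Ef$, where $E:H^{\frac12}(\Gamma)\to H^1(\Omega)$ is a bounded lifting, which exists for Lipschitz domains. Then $\|F\|_{H^1(\Omega)}\le C_E\|f\|_{H^{\frac12}(\Gamma)}$. Combining this with the two bounds above gives
\[
|\partial_\nu u(f)|\le \mathbf{c}_1\left(1+\|V\|_{L^\alpha(\Omega)}\right)\|u\|_{H^1(\Omega)}\|f\|_{H^{\frac12}(\Gamma)} .
\]
Thus $\partial_\nu u\in H^{-1/2}(\Gamma)$, understood as the antidual of $H^{\frac12}(\Gamma)$, and \eqref{p6} holds.

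Finally, if $u\in H^2(\Omega)$, Green's formula shows that this functional coincides with the classical normal derivative.
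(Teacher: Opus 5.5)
Your proof is correct and follows essentially the argument the paper relies on; the paper omits the proof and refers to \cite[Lemma 1.2]{Ch2}. That argument is: independence of the extension via $F_1-F_2\in H_0^1(\Omega)$, the weak form of $(-\Delta+V)u=0$ and density of $C_0^\infty(\Omega)$, followed by H\"older's inequality with the embedding $H^1(\Omega)\hookrightarrow L^{2\alpha/(\alpha-1)}(\Omega)$ and a bounded lifting $H^{\frac12}(\Gamma)\to H^1(\Omega)$ to obtain \eqref{p6}; your remark that $\mathbf{c}_1$ also depends on the fixed exponent $\alpha$ is accurate.
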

The proof of Lemma \ref{lemma-nor} is identical to that of \cite[Lemma 1.2]{Ch2} ; we therefore  omit it.

We can now define the Dirichlet-to-Neumann map associated with $V\in L_\ast^\alpha(\Omega)$ by
\[
\Sigma_V:f\in H^{\frac{1}{2}}(\Gamma)\mapsto \partial_\nu  u_V(f)\in H^{-\frac{1}{2}}(\Gamma).
\]
Combining \eqref{bou1} and \eqref{p6}, we obtain that $\Sigma_V\in \mathscr{B}(H^{\frac{1}{2}}(\Gamma),H^{-\frac{1}{2}}(\Gamma))$.

Let $2<q'<4$, $\mu >\frac{2}{4-q'}$ and set
\[
\eta_1=\eta_1(q'):=\frac{q'}{q'-2}\quad\text{and}\quad \eta_2=\eta_2(q',\mu):=\frac{\mu q'-2}{\mu (4-q')-2}.
\]
Note that $q$, the conjugate of $q'$, satisfies $\frac{4}{3}<q<2$. 

Let $Q=I\times I'$ be a fixed bounded rectangle such that $Q\Supset \Omega$, where $I=]a,b[$ and $I'=]a',b'[$. Fix $\varkappa>0$ and define
\[
\mathscr{V}_\mu=\left\{ V\in L^\alpha_\ast(\Omega); V\chi_Q\in L^{\eta_2}(I,L^{\eta_1}(I')), \|V\chi_Q\|_{L^{\eta_2}(I,L^{\eta_1}(I'))}\le \varkappa \right\}.
\]

In the following, the Fourier-Laplace transform of $w\chi_I$ or $w\chi_{I'}$ will be denoted by $\hat{w}$.

Our goal is to establish the following uniqueness result.

\begin{theorem}\label{thmu}
Let $U^1 \in L^2(I, \mathbb{R})$ be such that $\widehat{U^1}(i\xi) \ne 0$ for all $\xi$ belonging to a subset of $\mathbb{C}$ that has an accumulation point. If $V_1,V_2\in \mathscr{V}_\mu$ satisfy $V_1-V_2=U^1(x)U^2(x')\in L^2(\Omega,\mathbb{R})$ and $\Sigma_{V_1}=\Sigma_{V_2}$, then $V_1=V_2$.
\end{theorem}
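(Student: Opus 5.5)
We follow the classical route: an Alessandrini-type integral identity, fed with complex geometric optics (CGO) solutions built through a Carleman estimate adapted to the mixed-norm class $\mathscr{V}_\mu$. Exploiting the product structure of $V_1-V_2$ then reduces the problem to a one-dimensional analytic continuation argument on $\widehat{U^1}$.

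\textbf{Step 1 (integral identity).} For $u_1\in\mathscr{S}_{V_1}$ and $u_2\in\mathscr{S}_{V_2}$, we use Lemma \ref{lemma-nor} with $F=\overline{u_2}$ (or $u_2$, as appropriate) and the symmetry of the bilinear forms. Since $\Sigma_{V_1}=\Sigma_{V_2}$, we obtain
\[
\int_\Omega (V_1-V_2)u_1u_2\,dx=0 .
\]
Because $H^1(\Omega)\subset L^p(\Omega)$ for all $p<\infty$ in two dimensions and $V_1-V_2\in L^2$, the integral is well defined.

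\textbf{Step 2 (CGO solutions).} We look for solutions of the form
\[
u_j=e^{\zeta_j\cdot x}(1+w_j),\qquad \zeta_j\in\mathbb{C}^2,\quad \zeta_j\cdot\zeta_j=0 .
\]
The natural choice is phase $\pm\tau$ along $x$ and $\pm i\tau$ along $x'$, so that $\zeta_1+\zeta_2$ carries the frequency we want to test. The remainder $w_j$ solves
\[
(-\Delta-2\zeta_j\cdot\nabla+V_j)w_j=-V_j .
\]
We solve this by duality from a Carleman estimate with linear weight $e^{\tau x}$ on the rectangle $Q$. The estimate should read
\[
\tau^{\theta}\|e^{-\tau x}\varphi\|_{L^{q'}(Q)}\lesssim \|e^{-\tau x}\Delta\varphi\|_{L^{q}(Q)} .
\]
It is designed so that multiplication by $V\in L^{\eta_2}(I,L^{\eta_1}(I'))$ can be absorbed: by H\"older in $x'$ with $\frac1q=\frac1{q'}+\frac1{\eta_1}\cdot\ldots$ (this is where $\eta_1=\frac{q'}{q'-2}$ comes from), followed by H\"older/Young in $x$ using the exponent $\mu$. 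The conclusion we need is
\[
\|w_j\|_{L^{q'}}\to0 \quad\text{as }\tau\to\infty,
\]
uniformly for $\|V\chi_Q\|\le\varkappa$. \textbf{This is the main obstacle.} Here $\chi_Q$ alone is unbounded, and the Carleman estimate must be proved separately in $x$, via Fourier series or transforms in $x'$ on $I'$, with a one-dimensional $L^q\to L^{q'}$ resolvent bound in $x$ whose gain $\tau^{-\theta}$ beats the loss from $\mu$. I would first prove the one-dimensional resolvent estimate for $\partial_x^2-2\tau\partial_x-\lambda^2$ uniformly in $\lambda$, then combine it by Minkowski in mixed norms.

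\textbf{Step 3 (limit).} Insert the CGO solutions into the identity and let $\tau\to\infty$. Since $V_1-V_2\in L^2$ and $w_j\to0$ in $L^{q'}$ with $q'>2$, the cross terms vanish. This leaves
\[
\int U^1(x)U^2(x')\,e^{\xi x+ i\eta x'}\,dx\,dx'=0
\]
for a family of complex pairs $(\xi,\eta)$ with $\xi^2=\eta^2$-type constraints (up to factors $i$). In other words, $\widehat{U^1}(i\xi)\,\widehat{U^2}(\eta)=0$ on that family. Here $\widehat{\cdot}$ denotes the entire Fourier--Laplace transform of the compactly supported function, after extending $V_1-V_2$ by zero into $Q$.

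\textbf{Step 4 (conclusion).} For each fixed $\eta$ in the admissible family, the map $\xi\mapsto \widehat{U^1}(i\xi)\widehat{U^2}(\eta)$ is entire. If $\widehat{U^2}(\eta)\neq0$, we would obtain $\widehat{U^1}(i\cdot)\equiv0$, contradicting the nonvanishing of $\widehat{U^1}(i\xi)$ on a set with an accumulation point. More precisely, we pick $\xi$ in that set and vary the other free parameter in the CGO construction: the admissible phases allow $\eta$ to range over a set with an accumulation point once $\xi$ is fixed. This gives $\widehat{U^2}\equiv0$ by analyticity. Hence $U^2=0$, so $V_1-V_2=0$.
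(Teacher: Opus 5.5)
\textbf{There is a genuine gap in Step~4.} It rests on a claim that is false in the plane, so the argument does not close as written.

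Every $\zeta\in\mathbb{C}^2$ with $\zeta\cdot\zeta=0$ is a multiple of $(1,i)$ or $(1,-i)$. For the Carleman estimate with weight $e^{\pm\tau x}$ to apply, $e^{\pm\tau x}e^{\zeta_j\cdot x}$ must stay bounded on $Q$. This forces $\zeta_1=(-\tau+a_1)(1,\pm i)$ and $\zeta_2=(\tau+a_2)(1,\pm i)$, with independent signs and $a_1,a_2$ bounded as $\tau\to\infty$. If the two null directions differ, the product of the solutions contains the factor $e^{\pm2i\tau x'}$, and the main term tends to $0$ by the Riemann--Lebesgue lemma. If they agree, $\zeta_1+\zeta_2=(a_1+a_2)(1,\pm i)$. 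So the frequency in $x'$ is locked to the one in $x$; this is the constraint $\xi^2=\eta^2$ you noted yourself in Step~3. With the paper's choice $u_1=e^{(\tau-\xi)(-x+ix')}+e^{-\tau x}v_1$ and $\overline{u_2}=e^{\tau(x-ix')}+e^{\tau x}v_2$, the limit gives exactly
\[
\widehat{U^1}(i\xi)\,\widehat{U^2}(\xi)=0\qquad\text{for all }\xi\in\mathbb{C}.
\]
For fixed $\xi$ there are at most two admissible values of $\eta$, and vice versa. There is no ``other free parameter'' in the construction, and the higher-dimensional device of making $\zeta_1+\zeta_2$ arbitrary is unavailable in two dimensions.

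Hence neither half of Step~4 works. You do not know that $\xi\mapsto\widehat{U^1}(i\xi)\widehat{U^2}(\eta)$ vanishes for fixed $\eta$, and $\eta$ cannot range over a set with an accumulation point for fixed $\xi$. If it could, the same reasoning applied to the entire function $(\xi,\eta)\mapsto\int(V_1-V_2)e^{\xi x+i\eta x'}\,dx\,dx'$ would give $V_1=V_2$ with no product structure at all. The separability assumption is there precisely because linear-phase CGO solutions in the plane only probe a complex line.

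The missing step is the paper's, and it is short. Evaluate the displayed identity on the set $S$ where $\widehat{U^1}(i\xi)\neq0$. Then $\widehat{U^2}$ vanishes on $S$, which has an accumulation point, so the entire function $\widehat{U^2}$ is identically zero and $U^2=0$.

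\textbf{Two secondary points.} First, Step~2 is only a sketch, and it is not what the paper proves. Theorem~\ref{thmci} is a mixed-norm estimate from $L^\beta(I,L^q(I'))$ to $L^\gamma(I,L^{q'}(I'))$ with no power of $\tau$. The decay in Theorem~\ref{thmcgo} (only $\|v\|_{L^2(Q)}\to0$) comes instead from writing $V=V_0V_1$ with $|V_j|=|V|^{1/2}$. Each factor is split into a bounded part, handled by $\|E_\tau\|_{\mathscr{B}(L^2(Q))}\le\mathbf{c}|\tau|^{-1}$, and a part small in $L^{2\eta_2}(I,L^{2\eta_1}(I'))$.

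Second, in Step~3 the quadratic term $\int(V_1-V_2)e^{(\zeta_1+\zeta_2)\cdot x}w_1w_2$ is not controlled by $V_1-V_2\in L^2$ and $w_j\in L^{q'}$, because $q'<4$. Handling it requires three ingredients: the mixed H\"older inequality with $\frac{1}{\eta_1}+\frac{2}{q'}=1=\frac{1}{\eta_2}+\frac{2}{\beta'}$; a uniform bound on the remainders in $L^{\beta'}(I,L^{q'}(I'))$; and the same bounded/small splitting applied to $V_1-V_2$.
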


Let $U^1\in L^2(I,\mathbb{R})$ be nonnegative and not identically  zero. Then we have for all $\xi\in \mathbb{R}$
\[
\widehat{U^1}(i\xi)=\int_Ie^{x\xi}U^1(x)dx>0.
\]
This $U^1$ provides an example of function satisfying the assumption of Theorem \ref{thmu}.

Although the result of Theorem \ref{thmu} may appear limited, to the best of our knowledge, only two uniqueness results in the literature address the determination of an unbounded potential for the two-dimensional Schr\"odinger equation from Cauchy data. The first result is due to Bukhgeim \cite{Bu} for a potential in $L^p$ with $p>2$. A recent improvement was obtained by Bl$\mathring{\rm a}$sten, Tzou, and Wang \cite{BTW} for a potential in $L^p$ with $p>\frac{4}{3}$. Furthermore, we find few results in the literature concerning stability. In the present work, we complement the aforementioned uniqueness results and establish a stability inequality. In three dimensions or higher, Dos Santos, Kenig, and Salo \cite{DKS} prove a uniqueness result in the case of an admissible manifold, for the optimal class of unbounded potentials. The case of a bounded domain in three dimensions was studied by the first author in \cite{Ch}, where he established uniqueness and stability results using ideas similar to those in \cite{DKS}.

More is known concerning the determination of the conductivity in the two-dimensional case from the Cauchy data. In the isotropic case, the uniqueness result has been established by Nachman \cite{Na} for $W^{2,p}$, $p>2$, conductivities. This result was extended to $W^{1,p}$ conductivities with $p>2$ by Brown and Uhlmann \cite{BU}, and subsequently generalized for $L^\infty$ conductivities by Astala and P\"aiv\"arinta \cite{AP}. We mention a generic uniqueness result by Sun and Uhlmann \cite{SU} for $W^{3,\infty}$ conductivities, as well as a logarithmic stability inequality established by Sun \cite{Su} for $C^4$ conductivities, subject to certain additional conditions. The anisotropic case for $L^\infty$ conductivities has been proved by Astala, P\"aiv\"arinta and Lassas \cite{APL}. The case of partial Cauchy data, for both isotropic and anisotropic smooth conductivities, has been obtained by Imanuvilov, Uhlmann and Yamamoto in \cite{IUY1, IUY2,IUY3}. 

The proof of Theorem \ref{thmu} is given in Section \ref{s3}. Beforehand, we construct in Section \ref{s2} special solutions of the Schr\"odinger operator with an unbounded potential which will be used in the proof of Theorem \ref{thmu}. The special solutions we have constructed are usually called CGO (complex geometric optic) solutions. Our construction relies on a specific Carleman inequality, proven in detail in Appendix \ref{appendixA}. Finally, in Section \ref{s4} we demonstrate a quantitative version of Theorem \ref{thmu}.

\section{CGO solutions}\label{s2}

\subsection{Carleman inequality}

Let $\ell'=b'-a'$. As $2<q'<4$ and $\mu>\frac{2}{4-q'}$, we verify that
\begin{equation}\label{theta}
0<\theta:=\frac{\mu(q'-2)}{\mu q'-2}<\frac{1}{2}.
\end{equation}

Let $1<\beta<\gamma$ be chosen such that
\begin{equation}\label{bg}
\frac{1}{\beta}-\frac{1}{\gamma}=1-2\theta=\frac{\mu(4-q')-2}{\mu q'-2}.
\end{equation}

The proof of Carleman's inequality, stated below, consists of an adaptation of that of \cite[Proposition 2.1]{DKS}. For the sake of completeness, we provide a detailed proof in Appendix \ref{appendixA}. This Carleman inequality plays a key role in constructing CGO solutions.

\begin{theorem}\label{thmci}
For all $|\tau| \ge 4$ and $u\in C_0^\infty (Q)$, we have
\[
\|u\|_{L^\gamma(I,L^{q'}(I'))}\le \mathbf{c}(1+\ell^{\frac{1}{\beta'}+\frac{1}{\gamma}})\|(e^{\tau t}\Delta e^{-\tau t})u\|_{L^\beta(I,L^q(I'))},
\]
where $\ell=b-a$ and $\mathbf{c}=\mathbf{c}(\ell',q',\mu)>0$ is a constant.
\end{theorem}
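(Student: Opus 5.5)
We follow the strategy of \cite[Proposition 2.1]{DKS}: write $t=x$ (the variable in $I$) and conjugate, so that
\[
P_\tau:=e^{\tau t}\Delta e^{-\tau t}=\partial_t^2-2\tau\partial_t+\tau^2+\partial_{x'}^2 .
\]
Since $u\in C_0^\infty(Q)$ and $Q=I\times I'$, we expand $u$ in the Dirichlet eigenbasis of $-\partial_{x'}^2$ on $I'$, namely $\phi_k(x')=\sqrt{2/\ell'}\sin(k\pi(x'-a')/\ell')$ with eigenvalues $\lambda_k=(k\pi/\ell')^2$. Writing $u=\sum_k u_k(t)\phi_k(x')$, the operator acts diagonally:
\[
P_\tau u=\sum_k (\partial_t-\tau-\sqrt{\lambda_k})(\partial_t-\tau+\sqrt{\lambda_k})u_k\,\phi_k .
\]
Each factor $\partial_t-\tau\mp\sqrt{\lambda_k}$ is a first order ODE operator in $t$ with an explicit fundamental solution, an exponential truncated to a half-line. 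The plan is to invert $P_\tau$ on compactly supported functions by a two-step solution operator, $u=\sum_k (G^+_k\ast G^-_k\ast f_k)\phi_k$ with $f=P_\tau u$. This identity holds for compactly supported $u$ because the chosen kernels decay on the correct side.

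We then split the inverse into two parts according to the sign of $\tau-\sqrt{\lambda_k}$. The first part collects the modes with $|\sqrt{\lambda_k}-|\tau||\ge 1$ (or $\ge c$). There the multiplier $((is-\tau)^2+\lambda_k)^{-1}$, after Fourier transform in $t$, behaves like $(|s|+||\tau|-\sqrt{\lambda_k}|)^{-1}(|s|+|\tau|+\sqrt{\lambda_k})^{-1}$. This part will be bounded $L^q(I')\to L^{q'}(I')$ in $x'$ with an extra gain in $t$. The second part collects the $O(1)$ modes near the characteristic set $\sqrt{\lambda_k}\approx|\tau|$. Because the spacing of the $\sqrt{\lambda_k}$ is $\pi/\ell'$, there are boundedly many such $k$, uniformly in $\tau$. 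This is where the condition $|\tau|\ge4$ and the dependence of $\mathbf{c}$ on $\ell'$ enter.

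For the $x'$-direction we use the two-dimensional (actually one-dimensional in $x'$) eigenfunction bounds $\|\phi_k\|_{L^\infty}\le C$. Then $L^q\to\ell^{q'}$ Hausdorff--Young-type estimates for sine coefficients on $I'$ hold, together with the dual $\ell^{q}\to L^{q'}$ bound. Interpolating these with the trivial $L^2$ bound gives, for a single frequency window, a gain of order $|\tau|^{-\theta'}$ for some exponent determined by $q'$. For the $t$-direction, the resulting kernel for fixed $k$ is of the form $e^{-c|\tau-\sqrt{\lambda_k}||t-s|}$ truncated. Summing over $k$, we expect to obtain a kernel bounded by $|t-s|^{-(1-2\theta)}$-type behaviour, or $|t-s|^{-2\theta}$ after using the $L^q\to L^{q'}$ gain. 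One then applies the one-dimensional Hardy--Littlewood--Sobolev inequality on $I$ with $\frac1\beta-\frac1\gamma=1-2\theta$, which is exactly \eqref{bg}. The near-characteristic modes give a kernel that is merely bounded (a pure exponential/oscillation). For this piece we use Young/Hölder on the bounded interval $I$, which produces the factor $\ell^{\frac1{\beta'}+\frac1\gamma}$ in the statement. This is why the constant has the form $1+\ell^{\frac{1}{\beta'}+\frac{1}{\gamma}}$.

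The main obstacle is the sum over $k$ of the off-characteristic part in mixed norms. We must show that the operator with kernel
\[
K(t,s;x',y')=\sum_k G_k(t-s)\phi_k(x')\phi_k(y')
\]
satisfies $\|K(t,s)\|_{L^q(I')\to L^{q'}(I')}\lesssim |t-s|^{-(1-2\theta)}$ uniformly in $\tau$. I would first prove this by interpolating between the $L^2\to L^2$ bound $\sup_k|G_k(t-s)|$ and the $L^1\to L^\infty$ bound $\sum_k|G_k(t-s)|$. Both sums are estimated by comparing with integrals, using $\sum_k e^{-c|\tau-\sqrt{\lambda_k}|r}\lesssim 1+r^{-1}$ together with the $|\tau|^{-1}$ prefactor. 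The interpolation parameter is chosen to match $\theta$ from \eqref{theta}, and the restriction $\mu>\frac2{4-q'}$ guarantees the exponent is below $1$, so that HLS applies.
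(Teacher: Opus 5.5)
Your overall scheme (sine expansion in $x'$, explicit mode kernels $m_j^\tau$ with $|m_j^\tau(\eta)|\le(2\lambda_j)^{-1}e^{-|\tau-\lambda_j||\eta|}$, $\lambda_j=j\pi/\ell'$, a bounded kernel for the $O(1)$ modes with $|\lambda_j-\tau|<1$ giving $\ell^{\frac1{\beta'}+\frac1\gamma}$, HLS for the rest) matches the paper. \textbf{The gap is that the step you call the main obstacle is aimed at the wrong exponent.} In one dimension, convolution with $|t|^{-a}$ maps $L^\beta\to L^\gamma$ exactly when $\frac1\beta-\frac1\gamma=1-a$. So to reach \eqref{bg} you need $\|K(t,s)\|_{L^q(I')\to L^{q'}(I')}\lesssim|t-s|^{-2\theta}$, which is what the paper proves. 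Your target $|t-s|^{-(1-2\theta)}$ only gives $\frac1\beta-\frac1\gamma=2\theta$. When $\theta<\frac14$ (e.g.\ $q'=2.2$, $\mu=10$, $\theta=0.1$) that is strictly too weak, even on the bounded interval $I$. Your remark that $\mu>\frac2{4-q'}$ keeps the exponent below $1$ is true of $2\theta$, not of $1-2\theta$, so the two have been conflated.

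Moreover, nothing in your argument produces $\theta$. Riesz--Thorin between the $L^2\to L^2$ bound $\sup_j|m_j^\tau(\eta)|$ and the $L^1\to L^\infty$ bound $\frac2{\ell'}\sum_j|m_j^\tau(\eta)|$, landing on $L^q\to L^{q'}$, has the forced parameter $\sigma=1-\frac2{q'}$. That parameter is independent of $\mu$, so it cannot be ``chosen to match'' \eqref{theta}. In the paper, $\theta$ comes from Lemma \ref{lem1}: the cluster projections are bounded by interpolating $L^2$ with $L^{\mu q'}$, at the cost of a factor $(1+k)^\theta$. The sum $\sum_k(1+k)^{2\theta}k^{-1}e^{-|\tau-k||t-s|}$ is then what yields $|t-s|^{-2\theta}$.

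Your route can be repaired, and then it is a genuinely different and sharper argument. Use the correct mode bounds. The prefactor is $(2\lambda_j)^{-1}$, not $|\tau|^{-1}$, and for $\lambda_j<\tau$ you have $|m_j^\tau(\eta)|\le\min((2\lambda_j)^{-1},|\eta|)e^{-(\tau-\lambda_j)|\eta|}$. Over the modes with $|\lambda_j-\tau|\ge1$ this gives, uniformly in $\tau\ge4$,
$\sup_j|m_j^\tau(\eta)|\lesssim\tau^{-1}e^{-|\eta|}$ and $\sum_j|m_j^\tau(\eta)|\lesssim(1+\log_+\frac1{|\eta|})e^{-|\eta|}$.
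Hence $\|K(\eta)\|_{L^q\to L^{q'}}\lesssim(1+\log_+\frac1{|\eta|})^\sigma e^{-|\eta|}\lesssim|\eta|^{-2\theta}$. Even your cruder bound $\sum_j|m_j^\tau|\lesssim1+(\tau|\eta|)^{-1}$ gives $|\eta|^{-\sigma}$ near $0$, and that suffices since $\sigma=\frac{q'-2}{q'}<\theta$. HLS with \eqref{bg} then closes the proof.

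Compared with the paper, this repaired version uses the uniform bound $\|\phi_j\|_{L^\infty}\le\sqrt{2/\ell'}$ instead of the lossy Sobolev-based Lemma \ref{lem1}. It also shows the off-characteristic kernel is essentially logarithmic, so the $\mu$-dependence is not needed for that part. As written, however, your plan pairs the wrong kernel exponent with \eqref{bg}, and the HLS step does not yield the stated inequality.
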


\subsection{Constructing CGO solutions}

Let $1<\beta<2$ ; in this case, we can choose $\gamma=\beta'$ in Theorem \ref{thmci} (see the proof below).  The elements of $I$ will be denoted by $x$ and those of $I'$ will be denoted by $x'$.

Let 
\[
\Lambda:=\left\{\tau \in \mathbb{R};\; |\tau|\ge 4,\; |\tau|\ne \frac{j\pi}{\ell'},\; j\in \mathbb{N}\right\}.
\]

Thereafter, $\mathbf{c}=\mathbf{c}(\ell,\ell',q, \mu,\beta)>0$ will denote a generic constant.

In light of Theorem \ref{thmci}, proceeding as in the proof of \cite[Proposition 4.1]{KSU}, we obtain the following result.

\begin{proposition}\label{proposition1}
For all $\tau\in \Lambda$, there exists $E_\tau :L^2(Q)\rightarrow H^2(Q)$ such that
\begin{align*}
&e^{\tau x}(-\Delta)e^{-\tau x}E_\tau f=f,\quad f\in L^2(Q),
\\
&E_\tau e^{\tau x}(-\Delta)e^{-\tau x} f=f,\quad f\in C_0^\infty (Q).
\end{align*}
Furthermore, the following inequalities hold for all $f\in L^2(Q)$:
\begin{align}
&\|E_\tau f\|_{H^s(Q)}\le \mathbf{c}|\tau|^{s-1}\|f\|_{L^2(Q)},\quad 0\le s\le 2,\label{est1}
\\
&\|E_\tau f\|_{L^{\beta'}(I,L^{q'}(I'))}\le \mathbf{c}\|f\|_{L^\beta(I,L^q(I'))}.\label{est2}
\end{align}
\end{proposition}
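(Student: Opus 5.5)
The plan is to construct $E_\tau$ explicitly by Fourier series in $x'$ and an explicit ODE solution operator in $x$, following the scheme of \cite[Proposition 4.1]{KSU}.

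\emph{Step 1: the conjugated operator.} Conjugation gives
\[
P_\tau:=e^{\tau x}(-\Delta)e^{-\tau x}=-\partial_x^2+2\tau\partial_x-\tau^2-\partial_{x'}^2 .
\]
Expand $f\in L^2(Q)$ in the sine basis $\phi_k(x')=\sin(k\pi(x'-a')/\ell')$, $k\ge1$, of $L^2(I')$. The $k$-th mode of $P_\tau$ is then the ODE operator
\[
p_k=-\partial_x^2+2\tau\partial_x-\tau^2+\lambda_k^2,\qquad \lambda_k=\frac{k\pi}{\ell'} .
\]
Its symbol in $x$ (Fourier variable $\xi$) is $\xi^2+2i\tau\xi+\lambda_k^2-\tau^2$. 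This vanishes only when $\xi=0$ and $\lambda_k^2=\tau^2$. Since $\tau\in\Lambda$, we have $|\tau|\neq\lambda_k$ for every $k$, so the characteristic roots are $\tau\pm\lambda_k$, both real and nonzero.

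\emph{Step 2: definition of $E_\tau$.} Extend $f$ by zero in $x$ to $\mathbb{R}\times I'$. Define $E_\tau f$ by solving $p_k u_k=f_k$ on $\mathbb{R}$ with the tempered fundamental solution. This amounts to dividing by the symbol in the Fourier variable; equivalently, it is convolution with the exponentials $e^{(\tau\pm\lambda_k)x}$ restricted to the correct half-lines. Then restrict to $Q$. The identity $P_\tau E_\tau f=f$ is immediate.

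For $f\in C_0^\infty(Q)$, the function $g:=P_\tau f$ has compact support. Uniqueness of tempered solutions of $p_k u=g_k$ (the symbol has no real zeros) gives $E_\tau g=f$.

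\emph{Step 3: the $H^s$ bounds \eqref{est1}.} These follow from lower bounds on the symbol,
\[
|\xi^2+2i\tau\xi+\lambda_k^2-\tau^2|\ge c|\tau|\,\mathrm{dist}(\ldots),
\]
after localizing in $x$ with a cutoff. The small-divisor issue is handled as in \cite{KSU}, by a cutoff in $x$ over the bounded interval $I$. Interpolation then covers $0\le s\le2$. I expect the result to be $\|E_\tau f\|_{L^2}\le c|\tau|^{-1}\|f\|$ and $\|E_\tau f\|_{H^2}\le c|\tau|\|f\|$, with the constant uniform in $\tau\in\Lambda$. I am not sure this uniformity holds directly, since $\tau$ may come close to $\lambda_k$. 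If it does not, I will adopt the KSU trick: multiply the symbol by the phase $e^{i x'\pi/(2\ell')}$-type shift, or restrict to a lattice of $\tau$ chosen away from the $\lambda_k$.

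\emph{Step 4: the mixed-norm bound \eqref{est2}.} This is the main obstacle. The plan is to dualize Theorem \ref{thmci} via Hahn–Banach. Theorem \ref{thmci} gives an injectivity estimate for the adjoint $P_{-\tau}$ on $C_0^\infty(Q)$, with $\gamma=\beta'$; this is admissible because $\frac1\beta-\frac1{\beta'}=\frac2\beta-1$, and we can take $\beta$ so that this equals $1-2\theta$, consistent with $\beta<2$. Dualizing yields some solution $w\in L^{\beta'}(I,L^{q'}(I'))$ of $P_\tau w=f$ with $\|w\|\le c\|f\|_{L^\beta(I,L^q(I'))}$.

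It remains to show that this $w$ coincides with $E_\tau f$, which is delicate. Rather than abstract dualization, I will try to prove \eqref{est2} directly from the proof of Theorem \ref{thmci} in Appendix \ref{appendixA}. That proof presumably bounds exactly this Fourier multiplier operator in mode form, so it applies to the $E_\tau$ defined above once the modes are identified. Finally, \eqref{est2} is extended from smooth $f$ to all $f\in L^2(Q)$ by density, using \eqref{est1} for the convergence.
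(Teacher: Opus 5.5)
Your plan follows essentially the same route as the paper. The paper proves the proposition by citing the explicit mode-by-mode construction of \cite[Proposition 4.1]{KSU} together with Theorem \ref{thmci}. Theorem \ref{thmci} enters through its proof in Appendix \ref{appendixA}, which is an estimate for the same convolution kernels $m_j^\tau$ that define $E_\tau$ (up to sign); this is your final version of Step 4. The uniformity you hedge on in Step 3 does hold for every $\tau\in\Lambda$: since $|m_j^\tau(\eta)|\le \lambda_j^{-1}e^{-|\tau-\lambda_j||\eta|}$ and only $|\eta|\le \ell$ matters, one gets $\|m_j^\tau\|_{L^1(-\ell,\ell)}\le \mathbf{c}|\tau|^{-1}$ uniformly in $j$. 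So neither the phase shift nor restricting $\tau$ to a lattice is needed, and the lattice restriction would not prove the statement as formulated anyway.
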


Before stating our next result, we make some formal calculations. In what follows, we use repeatedly the following H\"older type inequality:
\begin{equation}\label{fh}
\|fg\|_{L^\ell}\le \|f\|_{L^{\ell_1}}\|g\|_{L^{\ell_2}},\quad \frac{1}{\ell_1}+\frac{1}{\ell_2}=\frac{1}{\ell}.
\end{equation}

Let $V_0\in L^{\ell_2}(I, L^{\ell_1}(I'))$. Applying \eqref{fh}, we have
\[
\|V_0(x,\cdot)E_\lambda V_1f(x,\cdot)\|_{L^2(I')}\le \|V_0(x,\cdot)\|_{L^{\ell_1}(I')}\|E_\lambda V_1f(x,\cdot)\|_{L^{q'}(I')}
\]

with $\frac{1}{\ell_1}+\frac{1}{q'}=\frac{1}{2}$. Using once again \eqref{fh} and \eqref{est2}, we obtain
\begin{align*}
\|V_0E_\lambda V_1f\|_{L^2(Q)}&\le \|V_0\|_{L^{\ell_2}(I,L^{\ell_1}(I'))}\|E_\lambda V_1f\|_{L^\gamma(I,L^{q'}(I'))}
\\
&\le \mathbf{c}\|V_0\|_{L^{\ell_2}(I,L^{\ell_1}(I'))}\|V_1f\|_{L^\beta(I,L^q(I'))},
\end{align*}
where $\frac{1}{\ell_2}+\frac{1}{\gamma}=\frac{1}{2}$.

On the other hand, we proceed as above to get
\[
\|V_1f\|_{L^\beta(I,L^q(I'))}\le \|V_1\|_{L^{k_2}(I,L^{k_1}(I'))}\|f\|_{L^2(Q)},
\]
where $\frac{1}{k_1}+\frac{1}{2}=\frac{1}{q}$ and $\frac{1}{k_2}+\frac{1}{2}=\frac{1}{\beta}$. Consequently, we have
\[
\|V_0E_\lambda V_1f\|_{L^2(Q)}\le \mathbf{c}\|V_0\|_{L^{\ell_2}(I,L^{\ell_1}(I'))}\|V_1\|_{L^{k_2}(I,L^{k_1}(I'))}\|f\|_{L^2(Q)}.
\]

Let $V=|V|e^{i\varphi}\in L^{\eta_2}(I,L^{\eta_1}(I'))$, $V_0=|V|^se^{i\varphi}$ and $V_1=|V|^{1-s}$, where $\varphi:=\arg(V)$ and $s\in (0,1)$ is to be determined. Then
\begin{align*}
&\|V_0\|_{L^{\ell_2}(I,L^{\ell_1}(I'))}=\|V\|_{L^{s\ell_2}(I,L^{s\ell_1}(I'))}^s,
\\
&\|V_1\|_{L^{k_2}(I,L^{k_1}(I'))}=\|V\|_{L^{(1-s)k_2}(I,L^{(1-s)k_1}(I'))}^{1-s}.
\end{align*}
These identities impose the following conditions:
\[
s\ell_1=\eta_1,\quad s\ell_2=\eta_2,\quad (1-s)k_1=\eta_1\quad \text{and} \quad (1-s)k_2=\eta_2.
\]
In particular, we must have
\[
\frac{\eta_1}{\ell_1}=\frac{\eta_2}{\ell_2}=s\quad\text{and}\quad \frac{\eta_1}{k_1}=\frac{\eta_2}{k_2}=1-s.
\]
Equivalently, the following conditions must hold:
\[
\eta_1\left(\frac{1}{2}-\frac{1}{q'}\right)=\eta_2\left(\frac{1}{2}-\frac{1}{\gamma}\right)=s
\quad\text{and}\quad
\eta_1\left(\frac{1}{q}-\frac{1}{2}\right)=\eta_2\left(\frac{1}{\beta}-\frac{1}{2}\right)=1-s,
\]
that is,
\begin{equation}\label{con1}
\eta_1\frac{q'-2}{2q'}=\eta_2\frac{\gamma-2}{2\gamma}=s
\quad\text{and}\quad \eta_1\frac{2-q}{2q}=\eta_2\frac{2-\beta}{2\beta}=1-s.
\end{equation}
The following formula will be useful in the following: if $1<r<2$ and $r'=\frac{r}{r-1}$ is its conjugate, then
\[
\frac{r'-2}{r'}=\frac{2-r}{r}.
\]
Let us choose  $\gamma=\beta'$, which stands for the conjugate of $1<\beta<2$. Then, using the relation
\[
\frac{1}{\beta}-\frac{1}{\gamma}=\frac{\mu(4-q')-2}{\mu q'-2},
\]
we obtain
\[
\beta'=\frac{\mu q'-2}{\mu( q'-2)}.
\]
As $\mu>\frac{2}{4-q'}$, we verify that $\beta'>2$. With this choice of $\gamma$, \eqref{con1} gives
\begin{equation}\label{idd}
s=\frac{1}{2},\quad \eta_1=\frac{q'}{q'-2}
\quad\text{and}\quad \eta_2=\frac{\mu q'-2}{\mu(4-q')-2}.
\end{equation}
Remark that $\eta_1=\eta_1(q')$ is a decreasing function from $\left(2,4\right)$ onto $(2,\infty)$ and, in the particular case where $\mu=\mu(q')=\frac{3}{4-q'}$, $\eta_2=\eta_2(q')=\frac{5q'-8}{4-q'}$ is an increasing function from $\left(2,4\right)$ onto $\left(1,\infty\right)$.

In light of the previous discussion, we can state the following result.

\begin{proposition}\label{proposition2}
Let $\eta_1$ and $\eta_2$ be given by \eqref{idd}, $V=|V|e^{i\varphi}\in L^{\eta_2}(I,L^{\eta_1}(I'))$, $V_0=|V|^{\frac{1}{2}}e^{i\varphi}$ and $V_1=|V|^{\frac{1}{2}}$. Then, $V_j\in L^{2\eta_2}(I,L^{2\eta_1}(I'))$,
\[
\|V_j\|_{L^{2\eta_2}(I,L^{2\eta_1}(I'))}\le \|V\|_{L^{\eta_2}(I,L^{\eta_1}(I'))}^{\frac{1}{2}},\quad j=0,1.
\]
Furthermore, for all $\tau \in \Lambda$, we have
\[
\|V_0E_\tau V_1f\|_{L^2(Q)}\le \mathbf{c}\|V\|_{L^{\eta_2}(I,L^{\eta_1}(I'))}\|f\|_{L^2(Q)}, \quad f\in L^2(Q).
\]
\end{proposition}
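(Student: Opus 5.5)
The argument has two parts: the pointwise/mixed-norm identity for $V_j$, and the operator bound obtained by chaining Hölder with \eqref{est2}. Both essentially formalize the formal computation preceding the statement with $s=\frac12$ and $\gamma=\beta'$.

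\textbf{Norm identity.} Since $|V_0|=|V_1|=|V|^{\frac12}$, for a.e.\ $x\in I$ we have
\[
\|V_j(x,\cdot)\|_{L^{2\eta_1}(I')}=\|V(x,\cdot)\|_{L^{\eta_1}(I')}^{\frac12}.
\]
Taking the $L^{2\eta_2}(I)$ norm in $x$ then gives
\[
\|V_j\|_{L^{2\eta_2}(I,L^{2\eta_1}(I'))}=\Bigl(\int_I\|V(x,\cdot)\|_{L^{\eta_1}(I')}^{\eta_2}dx\Bigr)^{\frac{1}{2\eta_2}}=\|V\|^{\frac12}_{L^{\eta_2}(I,L^{\eta_1}(I'))}.
\]
This is an equality, so in particular the stated inequality holds. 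Measurability of $V_j$ is clear, since $\varphi$ is a measurable choice of argument.

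\textbf{Exponent bookkeeping.} Set $\ell_1=k_1=2\eta_1$ and $\ell_2=k_2=2\eta_2$, and check the four Hölder relations used earlier. From $\eta_1=\frac{q'}{q'-2}$ we get
\[
\frac{1}{2\eta_1}=\frac{q'-2}{2q'}=\frac12-\frac1{q'},
\]
and, by the conjugate formula, this also equals $\frac{2-q}{2q}=\frac1q-\frac12$. For the outer exponents, $\beta'=\frac{\mu q'-2}{\mu(q'-2)}$ is forced by \eqref{bg} with $\gamma=\beta'$ (since then $1-\frac2{\beta'}=1-2\theta$, i.e.\ $\frac1{\beta'}=\theta$). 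Hence
\[
\frac12-\frac1{\beta'}=\frac{\mu(4-q')-2}{2(\mu q'-2)}=\frac1{2\eta_2},
\]
and again by the conjugate formula this equals $\frac1\beta-\frac12$. We also need $1<\beta<2$, which follows from $\beta'>2$, i.e.\ from $\mu>\frac{2}{4-q'}$.

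\textbf{Operator bound.} Fix $f\in L^2(Q)$ and $\tau\in\Lambda$. Apply \eqref{fh} first in $x'$ and then in $x$:
\[
\|V_1f\|_{L^\beta(I,L^q(I'))}\le\|V_1\|_{L^{2\eta_2}(I,L^{2\eta_1}(I'))}\|f\|_{L^2(Q)},
\]
so $V_1f\in L^\beta(I,L^q(I'))\cap L^1(Q)$. Next, \eqref{est2} gives
\[
\|E_\tau V_1f\|_{L^{\beta'}(I,L^{q'}(I'))}\le\mathbf c\|V_1f\|_{L^\beta(I,L^q(I'))}.
\]
Finally, Hölder once more gives
\[
\|V_0E_\tau V_1f\|_{L^2(Q)}\le\|V_0\|_{L^{2\eta_2}(I,L^{2\eta_1}(I'))}\|E_\tau V_1f\|_{L^{\beta'}(I,L^{q'}(I'))}.
\]
Multiplying these three bounds and using the norm identity yields $\mathbf c\|V\|_{L^{\eta_2}(I,L^{\eta_1}(I'))}\|f\|_{L^2(Q)}$.

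\textbf{Main obstacle.} Proposition \ref{proposition1} defines $E_\tau$ on $L^2(Q)$ and states \eqref{est2} there, but $V_1f$ need not lie in $L^2(Q)$. I would resolve this by density. Both $L^2(Q)$ and the space $L^\beta(I,L^q(I'))$ contain $C_0^\infty(Q)$, and $\beta,q<\infty$, so \eqref{est2} extends $E_\tau$ uniquely to a bounded operator $L^\beta(I,L^q(I'))\to L^{\beta'}(I,L^{q'}(I'))$ that agrees with the original $E_\tau$ on $L^2(Q)\cap L^\beta(I,L^q(I'))$. The expression $E_\tau V_1 f$ is then understood through this extension. Alternatively, one can truncate $V$ to $V\chi_{\{|V|\le n\}}$ and pass to the limit using the uniform bound.
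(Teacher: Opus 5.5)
Your proposal is correct and follows the paper's own argument: it is the formal computation preceding the proposition (H\"older in $x'$ then in $x$ to place $V_1f$ in $L^\beta(I,L^q(I'))$, then \eqref{est2}, then H\"older again), with $s=\frac12$ and $\gamma=\beta'$ giving exactly the exponents in \eqref{idd}. Your density extension of $E_\tau$ from $L^2(Q)$ to $L^\beta(I,L^q(I'))$, where $L^2(Q)$ is densely contained because $Q$ is bounded, handles a domain issue for $V_1f\notin L^2(Q)$ that the paper leaves implicit.
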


Let $0<\epsilon<1$ and $V\in L^{\eta_2}(I,L^{\eta_1}(I'))$ satisfying $\|V\|_{L^{\eta_2}(I,L^{\eta_1}(I'))}\le \varkappa$. Under the assumptions and the notations of Proposition \ref{proposition2}, let
\begin{equation}\label{ww}
V_j^0:=V_j\chi_{\{|V_j|\le \rho\}}\quad\text{and}\quad V_j^1:=V_j\chi_{\{|V_j|> \rho\}},\quad \rho>0,\; j=0,1,
\end{equation}
where $\rho=\rho(\varkappa,\epsilon) >0$ is sufficiently large in such a way that $\|V_j^1\|_{L^{2\eta_2}(I,L^{2\eta_1}(I'))}\le \epsilon$. On the other hand, as $\|V_j^0\|_{L^\infty(Q)}\le \rho$, $j=0,1$, we obtain from \eqref{est1}
\begin{equation}\label{est3}
\|V_0^0E_\tau V_1^0f\|_{L^2(Q)}\le \mathbf{c}\rho^2|\tau|^{-1}\|f\|_{L^2(Q)}.
\end{equation}

From now on, the generic constant $\mathbf{c}$ can also depend on $\varkappa$. We verify from Proposition \ref{proposition2} that
\begin{align*}
&\|V_0^0E_\tau V_1^1f\|_{L^2(Q)}+ \|V_0^1E_\tau V_1^0f\|_{L^2(Q)}\le \mathbf{c}\epsilon\|f\|_{L^2(Q)},
\\
&\|V_0^1E_\tau V_1^1f\|_{L^2(Q)}\le \mathbf{c}\epsilon^2\|f\|_{L^2(Q)}.
\end{align*}
Then, it follows from \eqref{est3} that there exists $\tau_\ast=\tau_\ast(\ell,\ell',q',\mu,\gamma,\varkappa,\epsilon)>0$ such that for all $\tau \in \Lambda$ satisfying $|\tau| \ge \tau_\ast$, we have
\begin{equation}\label{est4.0}
\|V_0E_\tau V_1f\|_{L^2(Q)}\le \mathbf{c}\epsilon\|f\|_{L^2(Q)}.
\end{equation}
In other words, we proved the following lemma.
\begin{lemma}
Let $\eta_1$ and $\eta_2$ be given by \eqref{idd}, $V=|V|e^{i\varphi}\in L^{\eta_2}(I,L^{\eta_1}(I'))$ satisfying  $\|V\|_{L^{\eta_2}(I,L^{\eta_1}(I'))}\le \varkappa$. If  $V_0=|V|^{\frac{1}{2}}e^{i\varphi}$ and $V_1=|V|^{\frac{1}{2}}$, then
\begin{equation}\label{est4}
\lim_{\tau \in \Lambda,\, |\tau| \rightarrow \infty} \|V_0E_\tau V_1\|_{\mathscr{B}(L^2(Q))}=0.
\end{equation}
\end{lemma}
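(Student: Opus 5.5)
The plan is an $\epsilon/3$-type truncation argument. We split $V_0$ and $V_1$ into a bounded part and a part that is small in the mixed norm. We then combine the uniform bound of Proposition \ref{proposition2}, which holds for arbitrary potentials in the mixed Lebesgue class, with the decay $|\tau|^{-1}$ of \eqref{est1}.

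First, fix $0<\epsilon<1$ and split $V_j=V_j^0+V_j^1$ as in \eqref{ww}. Since $|V_j|=|V|^{1/2}$ and $V\in L^{\eta_2}(I,L^{\eta_1}(I'))$, we have $V_j\in L^{2\eta_2}(I,L^{2\eta_1}(I'))$, and both exponents are finite. Dominated convergence, applied first in $x'$ and then in $x$, gives $\|V_j^1\|_{L^{2\eta_2}(I,L^{2\eta_1}(I'))}\to0$ as $\rho\to\infty$. We therefore choose $\rho$ with this norm at most $\epsilon$.

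Next, we expand bilinearly:
\[
V_0E_\tau V_1=V_0^0E_\tau V_1^0+V_0^0E_\tau V_1^1+V_0^1E_\tau V_1^0+V_0^1E_\tau V_1^1 .
\]
For the first term, the bound $\|V_j^0\|_{L^\infty}\le\rho$ together with \eqref{est1} at $s=0$ gives \eqref{est3}, that is, the bound $\mathbf{c}\rho^2|\tau|^{-1}$.

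For the three mixed terms we reuse the computation preceding Proposition \ref{proposition2}. That computation, with $s=1/2$ and $\gamma=\beta'$, gives
\[
\|W_0E_\tau W_1f\|_{L^2(Q)}\le \mathbf{c}\|W_0\|_{L^{2\eta_2}(I,L^{2\eta_1}(I'))}\|W_1\|_{L^{2\eta_2}(I,L^{2\eta_1}(I'))}\|f\|_{L^2(Q)}
\]
for arbitrary $W_0,W_1$ in that space. The inputs are the Hölder inequality \eqref{fh} and estimate \eqref{est2}. Since $\|V_j^0\|\le\|V_j\|\le\varkappa^{1/2}$ and $\|V_j^1\|\le\epsilon$, the three mixed terms are bounded by $\mathbf{c}(\epsilon+\epsilon^2)$, where $\mathbf{c}$ now depends on $\varkappa$.

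To conclude, we pick $\tau_\ast$ with $\rho(\varkappa,\epsilon)^2/\tau_\ast\le\epsilon$. Then for all $\tau\in\Lambda$ with $|\tau|\ge\tau_\ast$ the operator norm is at most $\mathbf{c}\epsilon$, which is \eqref{est4.0}. Since $\epsilon$ is arbitrary, \eqref{est4} follows.

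The main point to check is that the bilinear estimate is stated for general factors $W_0,W_1$, and not only for the specific pair $|V|^{1/2}e^{i\varphi}$, $|V|^{1/2}$. This holds because the derivation before Proposition \ref{proposition2} only used Hölder and \eqref{est2}. A second check is that truncation preserves the mixed norm bounds, since $|V_j^i|\le|V_j|$ pointwise.
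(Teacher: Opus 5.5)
Your proposal is correct and follows the paper's argument essentially step for step. You use the truncation \eqref{ww}, the bound \eqref{est3} for the bounded--bounded term, and the bilinear H\"older/\eqref{est2} estimate behind Proposition \ref{proposition2} for the three remaining terms; as you rightly check, that estimate holds for arbitrary factors in $L^{2\eta_2}(I,L^{2\eta_1}(I'))$. You then choose $\tau_\ast$ to obtain \eqref{est4.0}. Your dominated-convergence justification for choosing $\rho$ is a welcome addition, but note that it makes $\rho$, and hence $\tau_\ast$, depend on $V$ itself and not only on $(\varkappa,\epsilon)$, as the notation $\rho(\varkappa,\epsilon)$ you inherit from the paper suggests; this is harmless here because the lemma concerns a fixed $V$.
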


Let $\xi \in \mathbb{C}$, $\tau \in \Lambda$ and $\zeta=\tau-\xi$. Define $\phi:Q\rightarrow \mathbb{C}$ by
\begin{equation}\label{phi}
\phi(x,x'):=e^{\zeta(-x+ix')},\quad (x,x')\in Q.
\end{equation}
We verify that $\Delta \phi=0$.

Let $V$ be a measurable function defined on $\Omega$. For simplicity, its extension by $0$ in $Q$ is denoted again by $V$. Formally, we seek $u=\phi +e^{-\tau x}v$ satisfying $(-\Delta +V)u=0$ in $Q$. That is, $v$ we must be a solution of the equation
\[
0=e^{\tau x}V\phi+e^{\tau x}(-\Delta) e^{-\tau x}v+Vv.
\]
Let us suppose that $v=E_\tau V_1 z$. Then, the equation above yields
\[
0=e^{\tau x}V_1V_0\phi+V_1z+V_1V_0E_\tau V_1 z.
\]
Therefore, it suffices to solve the equation
\begin{equation}\label{eqq1}
0=e^{\tau x}V_0\phi+z+V_0E_\tau V_1 z.
\end{equation}

From \eqref{est4.0}, there exists $\tau_0=\tau_0(\ell,\ell',q',\mu,\gamma,\varkappa)>0$ such that if $\tau \in \Lambda$, $|\tau|\ge \tau_0$, then
\[
\|V_0E_\tau V_1\|_{\mathscr{B}(L^2(Q))}\le \frac{1}{2},
\]
and hence $1+V_0E_\tau V_1$ is invertible in $\mathscr{B}(L^2(Q))$. As $e^{\tau x}V_0\phi \in L^2(Q)$,  
\begin{equation}\label{zz}
z:=(1+V_0E_\tau V_1)^{-1}e^{\tau x}V_0\phi 
\end{equation}
is a solution of \eqref{eqq1}. Let $0<\epsilon <1$ and choose $V_1^0$ and $V_1^1$ as in \eqref{ww} with  $\rho=|\tau|^{\frac{1}{4}}$. 
In this case, $\|V_1^0\|_{L^\infty(Q)}\le |\tau|^{\frac{1}{4}}$ and $\|V_1^1\|_{L^{2\eta_2}(I,L^{2\eta_1}(I'))}\le \epsilon$ provided that $|\tau|$ is large enough. Under this choice, using \eqref{est1} and \eqref{est2}, we get
\begin{align*}
\|v\|_{L^2(Q)}&\le \|E_\lambda V_1^0z\|_{L^2(Q)}+ \|E_\lambda V_1^1z\|_{L^2(Q)}
\\
&\le \mathbf{c}\left(|\tau|^{-\frac{3}{4}}\|z\|_{L^2(Q)}+\|V_1^1z\|_{L^\beta(I,L^q(I'))}\right)
\\
&\le \mathbf{c}\left(|\tau|^{-\frac{3}{4}}\|z\|_{L^2(Q)}+\|V_1^1\|_{L^{2\eta_2}(I,L^{2\eta_1}(I'))}\|z\|_{L^2(Q)}\right)
\\
&\le \mathbf{c}\left(|\tau|^{-\frac{3}{4}}+\epsilon\right)\|z\|_{L^2(Q)}.
\end{align*}
Since
\[
\|z\|_{L^2(Q)}\le 2\|V_0\|_{L^\infty(Q)} \|e^{\xi x+i\zeta x'}\|_{L^2(Q)},
\]
we obtain 
\[
\lim_{\tau \in \Lambda,\, |\tau| \rightarrow \infty}\|v\|_{L^2(Q)}=0.
\]
Similarly as above, we obtain $v\in L^{\beta'}(I,L^{q'}(I'))$ and then $u\in L^{\beta'}(I,L^{q'}(I'))$. We have $\frac{1}{q}=\frac{1}{\eta_1}+\frac{1}{q'}$ and $\frac{1}{\beta'}+\frac{1}{\eta_2}=\frac{1}{\tilde{\beta}}$, where
\begin{equation}\label{tildebeta}
\tilde{\beta}=\tilde{\beta}(\mu):=\frac{\mu q'-2}{2\mu -2}.
\end{equation}
We verify that $\mu \mapsto \tilde{\beta}(\mu)$ is a decreasing function from $\left(\frac{2}{4-q'},\infty\right)$ onto $ \left(\frac{q'}{2},2\right)$ and, since $\tilde{\beta}\mapsto \frac{2\tilde{\beta}}{2-\tilde{\beta}}$ in an increasing function from $ \left(\frac{q'}{2},2\right)$ onto $\left(\frac{2q'}{4-q'},\infty\right)\subset (2,\infty)$, we obtain
\begin{equation}\label{0.x.1}
\inf\left\{\frac{2\tilde{\beta}}{2-\tilde{\beta}};\; \mu \in \left(\frac{2}{4-q'},\infty\right)\right\}>2.
\end{equation} 
On the other hand, we can easily show that
\begin{equation}\label{0.x.2}
\inf\left\{\frac{2q}{2-q};\; q' \in \left(2,4\right)\right\}>4.
\end{equation}
Let 
\begin{equation}\label{hatbeta}
\hat{\beta}:=\min (\tilde{\beta},q). 
\end{equation}
Using \eqref{0.x.1} and \eqref{0.x.2}, we get
\begin{equation}\label{0.x.3}
\inf\left\{\frac{2\hat{\beta}}{2-\hat{\beta}};\; q' \in \left(2,4\right)\right\}>2.
\end{equation}

From \eqref{fh}, we have  $Vu\in L^{\tilde{\beta}}(I,L^q(I'))\subset L^{\hat{\beta}}(Q)$. As $\Delta u=Vu$, we have by the interior elliptic regularity that $u\in W^{2,\hat{\beta}}(\Omega)$. 
From $\hat{\beta}<2$, we obtain $W^{1,\hat{\beta}}(\Omega)\hookrightarrow L^{\frac{2\hat{\beta}}{2-\hat{\beta}}}(\Omega) \subset L^2(\Omega)$ by \eqref{0.x.3}. In particular, $u\in H^1(\Omega)$.

 In summary, the following result has been demonstrated.
 
\begin{theorem}\label{thmcgo}
Let $\tau_0$ be as above. For all $V\in L^{\eta_2}(I,L^{\eta_1}(I'))$, $\tau \in \Lambda$ such that $|\tau|\ge \tau_0$ and $\xi \in \mathbb{C}$, there exists $u\in \mathscr{S}_V$  of the form $u=e^{(\tau-\xi)(-x+ix')}+e^{-\tau x}v$, where $v\in H^1(\Omega)$ satisfies
\begin{equation}\label{lim}
\lim_{\tau\in \Lambda,\, |\tau| \rightarrow \infty}\|v\|_{L^2(Q)}=0.
\end{equation} 
\end{theorem}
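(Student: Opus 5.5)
The plan is to assemble the construction carried out just before the statement into a rigorous argument. Throughout, $V$ is extended by $0$ to $Q$.

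\textbf{Step 1 (factorization and choice of $\tau_0$).} Fix $V\in L^{\eta_2}(I,L^{\eta_1}(I'))$ with $\|V\|_{L^{\eta_2}(I,L^{\eta_1}(I'))}\le\varkappa$, and factor $V=V_1V_0$ with $V_0=|V|^{1/2}e^{i\varphi}$ and $V_1=|V|^{1/2}$. Proposition \ref{proposition2} places $V_j$ in $L^{2\eta_2}(I,L^{2\eta_1}(I'))$. By \eqref{est4.0} we choose $\tau_0$, depending only on $\ell,\ell',q',\mu,\gamma,\varkappa$, so that $\|V_0E_\tau V_1\|_{\mathscr{B}(L^2(Q))}\le \frac12$ for $\tau\in\Lambda$ with $|\tau|\ge\tau_0$. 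A Neumann series then gives $z$ as in \eqref{zz} with $\|z\|_{L^2(Q)}\le 2\|e^{\tau x}V_0\phi\|_{L^2(Q)}$.

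\textbf{Step 2 (the forcing term).} Since $|e^{\tau x}\phi|=|e^{\xi x+i\zeta x'}|$ is bounded on $Q$ uniformly in $\tau$ (for fixed $\xi$), we use Hölder with $V_0\in L^{2\eta_2}(I,L^{2\eta_1}(I'))\subset L^2(Q)$, valid because $2\eta_j\ge 2$. This bounds the forcing term rather than using $\|V_0\|_\infty$, which may be infinite.

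\textbf{Step 3 (the equation for $u$).} Set $v=E_\tau V_1z$ and $u=\phi+e^{-\tau x}v$. Proposition \ref{proposition1} gives $e^{\tau x}(-\Delta)e^{-\tau x}v=V_1z$. Multiplying \eqref{eqq1} by $V_1$ gives $V_1z=-e^{\tau x}V\phi-Vv$. Hence $(-\Delta+V)u=0$ in $\mathscr{D}'(Q)$, and in particular in $\Omega$.

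\textbf{Step 4 (regularity).} Proposition \ref{proposition1} already gives $v\in H^2(Q)$, since $V_1z\in L^2$? This needs checking: $V_1z$ lies only in $L^\beta(I,L^q(I'))$, not necessarily in $L^2(Q)$. So $E_\tau$ must be understood as extended to $L^\beta(I,L^q(I'))$ via \eqref{est2} by density, with $v\in L^{\beta'}(I,L^{q'}(I'))$. Then $Vu\in L^{\tilde\beta}(I,L^q(I'))\subset L^{\hat\beta}(Q)$ by \eqref{fh}. Interior elliptic regularity on a neighborhood of $\overline\Omega$ in $Q$ gives $u\in W^{2,\hat\beta}(\Omega)$. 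By \eqref{0.x.3}, $\nabla u\in W^{1,\hat\beta}\hookrightarrow L^2$, so $u\in H^1(\Omega)$. Since $\phi$ is smooth, $v=e^{\tau x}(u-\phi)\in H^1(\Omega)$, and therefore $u\in\mathscr{S}_V$.

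\textbf{Step 5 (the limit \eqref{lim}).} Split $V_1=V_1^0+V_1^1$ with $\rho=|\tau|^{1/4}$. By \eqref{est1} with $s=0$, $\|E_\tau V_1^0z\|_{L^2}\le c|\tau|^{-3/4}\|z\|_{L^2}$. The term $E_\tau V_1^1z$ is handled by \eqref{est2} followed by Hölder on the bounded set $Q$, which gives $L^{\beta'}(I,L^{q'}(I'))\subset L^2(Q)$ since $\beta',q'>2$. This yields a bound $c\|V_1^1\|_{L^{2\eta_2}(I,L^{2\eta_1}(I'))}\|z\|_{L^2}$, where the first factor tends to $0$ as $\rho\to\infty$ by dominated convergence. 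Because $\|z\|_{L^2}$ is bounded uniformly in $\tau$, the conclusion follows.

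The main obstacle is Step 4: making sense of $E_\tau$ on $V_1z\notin L^2$ and verifying that the extended operator still inverts the conjugated Laplacian. I would first try to establish this by approximating $V_1z$ in $L^\beta(I,L^q(I'))$ by $L^2$ functions and passing to the limit in distributions using \eqref{est2}.
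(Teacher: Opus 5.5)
Your proposal is correct and follows the paper's construction step by step: the factorization $V=V_1V_0$, the Neumann series for $z$ in \eqref{zz}, $v=E_\tau V_1 z$, $W^{2,\hat\beta}$ interior regularity giving $u\in H^1(\Omega)$, and the truncation at $\rho=|\tau|^{1/4}$ for \eqref{lim}. You are more careful than the paper in two places: you bound $e^{\tau x}V_0\phi$ via $V_0\in L^2(Q)$ instead of the unjustified $\|V_0\|_{L^\infty(Q)}$, and you make explicit that $E_\tau$ must be extended by density from $L^2(Q)$ to $L^\beta(I,L^q(I'))$ via \eqref{est2}; your distributional-limit argument works because the conjugated operator has constant coefficients and both mixed-norm spaces embed in $L^1(Q)$. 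One caveat, inherited from the paper, is that the truncation level $\rho$ in \eqref{ww} depends on the distribution function of $|V|$ and not only on $\varkappa$, so $\tau_0$ must really be allowed to depend on $V$. This is harmless for existence and for \eqref{lim} at fixed $V$, which is all the proof of Theorem \ref{thmu} uses.
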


\section{Proof of Theorem \ref{thmu}}\label{s3}

The following useful lemma will be used in the proof of Theorem \ref{thmu}.
\begin{lemma}\label{lemii}
For $j=1,2$, let $V_j\in L_\ast^\alpha(\Omega )$ and $u_j\in \mathscr{S}_{V_j}$. Then the following integral identity holds:
\begin{equation}\label{ii}
\int_\Omega (V_2-V_1)u_1\overline{u_2}dx =\langle \left(\Sigma_{V_2}-\Sigma_{V_1}\right)(u_1{_{|\Gamma}}),u_2{_{|\Gamma}}\rangle ,
\end{equation}
where $\langle \cdot, \cdot \rangle $ is the duality pairing between $H^{\frac{1}{2}}(\Gamma)$ and $H^{-\frac{1}{2}}(\Gamma)$. 
\end{lemma}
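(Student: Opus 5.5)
The identity follows directly from the weak definition of the normal derivative in Lemma \ref{lemma-nor}, applied twice with a suitable choice of the extension $F$. Set $f_1:=u_1{_{|\Gamma}}$ and $f_2:=u_2{_{|\Gamma}}$. Since $V_j\in L^\alpha_\ast(\Omega)$, the solution of \eqref{bvp1} is unique. Hence $u_1=u_{V_1}(f_1)$, and so $\Sigma_{V_1}f_1=\partial_\nu u_1$.

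Let $w:=u_{V_2}(f_1)\in\mathscr{S}_{V_2}$, so that $\Sigma_{V_2}f_1=\partial_\nu w$. In Lemma \ref{lemma-nor} we may take the extension $F=u_2\in H^1(\Omega)$ of $f_2$. This gives
\begin{align*}
\langle \Sigma_{V_2}f_1,f_2\rangle&=\int_\Omega\left(\nabla w\cdot\nabla\overline{u_2}+V_2w\overline{u_2}\right)dx,\\
\langle \Sigma_{V_1}f_1,f_2\rangle&=\int_\Omega\left(\nabla u_1\cdot\nabla\overline{u_2}+V_1u_1\overline{u_2}\right)dx .
\end{align*}
The pairing convention here is the one induced by Lemma \ref{lemma-nor}, that is, antilinear in the second slot.

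Next we remove $w$. Since $\overline{u_2}$ solves the conjugate equation (as $V_2$ is real), we have $\int_\Omega(\nabla\overline{u_2}\cdot\nabla\phi+V_2\overline{u_2}\phi)dx=0$ for all $\phi\in H^1_0(\Omega)$. The function $w-u_1$ lies in $H^1_0(\Omega)$ because both have trace $f_1$. Taking $\phi=w-u_1$ yields
\[
\int_\Omega\left(\nabla w\cdot\nabla\overline{u_2}+V_2w\overline{u_2}\right)dx=\int_\Omega\left(\nabla u_1\cdot\nabla\overline{u_2}+V_2u_1\overline{u_2}\right)dx.
\]
Subtracting the two expressions, the gradient terms cancel and we obtain \eqref{ii}.

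The only delicate point is justifying that all integrals make sense and that the weak formulation may be tested with $H^1_0$ functions rather than only $C_0^\infty$ ones. Here $V_j\in L^\alpha$ with $\alpha>1$, and in two dimensions $H^1(\Omega)\hookrightarrow L^p(\Omega)$ for all $p<\infty$. Hence $V_ju\overline{v}\in L^1(\Omega)$ for $u,v\in H^1(\Omega)$, with continuous dependence on $u$ and $v$. A density argument, approximating $\phi\in H^1_0(\Omega)$ by $C_0^\infty$ functions, then extends the equation $(-\Delta+V_2)u_2=0$, which holds in the distributional sense, to all test functions in $H^1_0(\Omega)$. This is the same fact that underlies the well-definedness in Lemma \ref{lemma-nor}, so it is routine.
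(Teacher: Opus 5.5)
Your proof is correct and is essentially the standard weak-formulation argument that the paper omits (it defers to Lemma 1.4 of its cited reference). You take $F=u_2$ in Lemma \ref{lemma-nor} for both $\Sigma_{V_1}f_1$ and $\Sigma_{V_2}f_1$, and you test the equation for $u_2$ against $u_{V_2}(f_1)-u_1\in H^1_0(\Omega)$ to trade $u_{V_2}(f_1)$ for $u_1$; the sign, the antilinear pairing convention and the integrability justification via $H^1(\Omega)\hookrightarrow L^p(\Omega)$, $p<\infty$, are all handled correctly.
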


We omit the proof of Lemma \ref{lemii}, as it is identical to that of \cite[Lemma 1.4]{Ch2}.

\begin{proof}[Proof of Theorem \ref{thmu}]
Let $U^1\in L^2(I,\mathbb{R})$ be such that $\widehat{U^1}(i\xi)\ne 0$ for all $\xi$ in a subset of $\mathbb{C}$ admitting  an accumulation point. Let $V_1,V_2\in \mathscr{V}_\mu$ satisfy $V_2-V_1=U^1(x)U^2(x')\in L^2(\Omega, \mathbb{R})$ and $\Sigma_{V_1}=\Sigma_{V_2}$.
 
Let $\tau_0$ be as in Theorem \ref{thmcgo} and $\tau \in \Lambda$ be such that $\tau \ge \tau_0$. According to Theorem \ref{thmcgo}, there exist $u_1\in \mathscr{S}_{V_1}$ and $u_2\in \mathscr{S}_{V_2}$ of the form
\begin{align*}
&u_1=e^{(\tau -\xi) (-x+ix')}+e^{-\tau x}v_1=e^{-\tau x}\left[e^{ -\xi (-x+ix')+i\tau x'}+v_1\right],
\\
&\overline{u_2}=e^{-\tau (-x+ix')}+e^{\tau x}v_2=e^{\tau x}(e^{-i\tau x'}+v_2) \quad(\tau\; \mbox{replaced by}\; -\tau,\; \xi=0),
\end{align*}
with
\[
 \lim_{\tau\in \Lambda,\, \tau \rightarrow +\infty}\|v_j\|_{L^2(Q)}=0,\quad j=1,2.
\] 
Therefore,
\[
u_1\overline{u_2}=e^{-\xi (-x+ix')}+\tilde{u},
\]
where
\begin{equation}\label{uu}
\tilde{u}=e^{-i\tau x'}v_1+e^{ -\xi (-x+ix')+i\tau x'}v_2+v_1v_2.
\end{equation}

Let $V=V_2-V_1$. Using \eqref{ii}, we obtain
\begin{equation}\label{eqq2}
0=\int_\Omega Vu_1\overline{u_2}dx=\widehat{U^1}(i\xi)\widehat{U^2}(\xi)+\int_\Omega V\tilde{u}dx.
\end{equation} 
 Since
\[
\left|\int_\Omega V\tilde{u}dx\right|\le \|V\|_{L^2(\Omega)}\|\tilde{u}\|_{L^2(\Omega)}\rightarrow 0\quad \mathrm{as}\; \tau \in \Lambda,\; \tau \rightarrow +\infty,
\]
passing to the limit, as $\tau \in \Lambda$, $\tau \rightarrow +\infty$, in \eqref{eqq2}, we obtain $\widehat{U^1}(i\xi)\widehat{U^2}(\xi)=0$ for all $\xi \in \mathbb{C}$. In light of the fact that $\widehat{U^1}(i\xi)\ne 0$, for all $\xi$ in a subset of $\mathbb{C}$ admitting an accumulation point, we obtain that $\widehat{U^2}=0$ and hence $U^2=0$.
\end{proof}

\section{Stability inequality}\label{s4}

In order to establish a stability inequality, we need to quantify \eqref{lim}. For this, we reduce the class of potentials. Let $\vartheta >0$ and $V=|V|e^{i\varphi}\in L^{\eta_2}(I,L^{\eta_1}(I'))$ with $\varphi:=\arg(V)$ satisfying : there exist $t_0>0$ and $\mathfrak{c}>0$ such that 
\begin{equation}\label{cos}
\||V|^{\frac{1}{2}}\chi_{\{|V|^{\frac{1}{2}}>t\}}\|_{L^{2\eta_2}(I,L^{2\eta_1}(I'))}\le \mathfrak{c}t^{-\vartheta},\quad t\ge t_0.
\end{equation}
If $\tau \in \Lambda$, $|\tau|\ge t_0^2$, $V_0=|V|^{\frac{1}{2}}e^{i\varphi}$ and $V_1=|V|^{\frac{1}{2}}$, we set
\[
V_j^0:=V_j\chi_{\{|V_j|\le |\tau|^{\frac{1}{4}}\}}
\quad\text{and}\quad V_j^1:=V_j\chi_{\{|V_j|> |\tau|^{\frac{1}{4}}\}},\quad j=0,1.
\]
We verify
\begin{align*}
&\|V_1^0\|_{L^\infty (Q)}\le |\tau|^{\frac{1}{4}}\quad\text{and}\quad \|V_j^0\|_{L^{2\eta_2}(I,L^{2\eta_1}(I'))}\le \|V\|_{L^{\eta_2}(I,L^{\eta_1}(I'))}^{\frac{1}{2}},\quad j=0,1,
\\
&\|V_j^1\|_{L^{2\eta_2}(I,L^{2\eta_1}(I'))}\le \mathfrak{c}|\tau|^{-\frac{\vartheta}{4}},\quad j=0,1.
\end{align*}

Let $\aleph_0=(\Omega, \ell,\ell',q',\mu,\varkappa,t_0,\vartheta,\mathfrak{c})$. From now on, $\mathbf{c}=\mathbf{c}(\aleph_0)>0$ will denote a generic constant.

Assume that $\|V\|_{L^{\eta_2}(I,L^{\eta_1}(I'))}\le \varkappa$. Let $\xi \in \mathbb{R}$ and $z$ be as in \eqref{zz} and $v=E_\tau V_1z$. We proceed as above to demonstrate 
\[
\|v\|_{L^2(Q)}\le \mathbf{c}(|\tau|^{-\frac{3}{4}}+|\tau|^{-\frac{\vartheta}{4}})e^{\mathbf{c}|\xi|}.
\]
Thus,
\begin{equation}\label{wy0}
\|v\|_{L^2(Q)}\le \mathbf{c}|\tau|^{-\vartheta^\ast}e^{\mathbf{c}|\xi|},
\end{equation}
where $\vartheta^\ast=\frac{1}{4}\min (3,\vartheta)$. Also, we have
\begin{align*}
\|v\|_{L^{\beta'}(I,L^{q'}(I'))}&\le \mathbf{c}\|V_1z\|_{L^\beta(I,L^q(I'))}
\\
&\le  \mathbf{c} \|V_1\|_{L^{2\eta_2}(I,L^{2\eta_1}(I'))}\|z\|_{L^2(Q)}
\\
&\le \mathbf{c}\|e^{\xi x+i\zeta x'}\|_{L^2(Q)}.
\end{align*}
That is, we have
\[
\|v\|_{L^{\beta'}(I,L^{q'}(I'))}\le e^{\mathbf{c}|\xi|}.
\]
If $u=\phi +e^{-\tau x}v$, where $\phi$ is as in \eqref{phi}, then
\begin{equation}\label{wy1}
\|u\|_{L^{\beta'}(I,L^{q'}(I'))}\le e^{\mathbf{c}(|\tau|+|\xi|)}.
\end{equation}

On the other hand, by $W^{2,\hat{\beta}}$-a priori estimate (e.g., \cite[Theorem 9.11]{GT}), we get
\[
\|u\|_{W^{2,\hat{\beta}}(\Omega)}\le c\left(\|Vu\|_{L^{\hat{\beta}}(Q)}+\|u\|_{L^{\hat{\beta}}(Q)}\right),
\]
where $\hat{\beta}$ is given by \eqref{hatbeta} and $c=c(\Omega,Q)>0$ is a constant. Hence, \eqref{wy1} yields
\[
\|u\|_{W^{2,\hat{\beta}}(\Omega)}\le e^{\mathbf{c}(|\tau|+|\xi|)},
\]
which, combined with the fact that $W^{2,\hat{\beta}}(\Omega)$ is continuously embedded in $H^1(\Omega)$, gives
\begin{equation}\label{wy2}
\|u\|_{H^1(\Omega)}\le e^{\mathbf{c}(|\tau|+|\xi|)}.
\end{equation}

Denote by $\mathscr{V}_\mu ^\ast$ the set of functions $V\in \mathscr{V}_\mu$ satisfying \eqref{cos}. Let $V_1,V_2\in \mathscr{V}_\mu ^\ast$ such that $V_1-V_2=U^1(x)U^2(x')$, where $U^1\in L^2(I, \mathbb{R})$ satisfies $\pm U^1\ge 0$ and $\|U^1\|_{L^1(I)}\ne 0$, and $U^2\in H^\nu (\mathbb{R}, \mathbb{R})$ for some $\nu >0$ satisfies $\|U^2\|_{H^\nu (\mathbb{R})}\le \varkappa$.

Let $\tilde{u}$ given by \eqref{uu} corresponding to $V_1,V_2\in \mathscr{V}_\mu ^\ast$. Then \eqref{wy0} implies
\[
\|\tilde{u}\|_{L^2(Q)}\le \mathbf{c}|\tau|^{-\vartheta^\ast}e^{\mathbf{c}|\xi|}.
\]
This and \eqref{ii} imply
\[
\mathbf{c}|\widehat{U^1}(i\xi)\widehat{U^2}(\xi)|\le |\tau|^{-\vartheta^\ast}e^{\mathbf{c}|\xi|}+|\langle \left(\Sigma_{V_2}-\Sigma_{V_1}\right)(u_1{_{|\Gamma}}),u_2{_{|\Gamma}}\rangle|.
\]

In the remaining part of this section, $\mathbf{c}_0=\mathbf{c}_0(\aleph)>0$ is a generic constant, where $\aleph=(\Omega, \ell,\ell',q',\mu,\beta,\varkappa,\nu,V_1,t_0,\vartheta,\mathfrak{c})$.

As $\pm U^1\ge 0$ and $\|U^1\|_{L^1(I)}\ne 0$, the inequality above yields
\begin{align*}
\mathbf{c}_0|\widehat{U^2}(\xi)|
&\le |\tau|^{-\vartheta^\ast}e^{\mathbf{c}|\xi|}+|\langle \left(\Sigma_{V_2}-\Sigma_{V_1}\right)(u_1{_{|\Gamma}}),u_2{_{|\Gamma}}\rangle|
\\
&\le |\tau|^{-\vartheta^\ast}e^{\mathbf{c}|\xi|}+\|\Sigma_{V_2}-\Sigma_{V_1}\|\|u_1\|_{H^{\frac{1}{2}}(\Gamma)}\|u_2\|_{H^{\frac{1}{2}}(\Gamma)}.
\end{align*}
From \eqref{wy2}, we get
\[
\|u_j\|_{H^{\frac{1}{2}}(\Gamma)}\le e^{\mathbf{c}(|\tau| +|\xi|)},\quad j=1,2.
\]
Hence,
\[
\mathbf{c}_0|\widehat{U^2}(\xi)|\le |\tau|^{-\vartheta^\ast}e^{\mathbf{c}|\xi|}+e^{\mathbf{c}(|\tau| +|\xi|)}\|\Sigma_{V_2}-\Sigma_{V_1}\|.
\]
Therefore, for all $\rho >0$ and $\tau \in \Lambda\cap [t_0,\infty)$, we obtain
\[
\mathbf{c}_0\|\widehat{U^2}\|_{L^2(\{|\xi|\le \rho\})}\le  \tau ^{-\vartheta^\ast}e^{\mathbf{c}\rho}+e^{\mathbf{c}(\tau +\rho)}\|\Sigma_{V_2}-\Sigma_{V_1}\|.
\]
Since $\mathbf{c}_0$, $\mathbf{c}$ and $\vartheta^\ast$ do not depend on $\tau$, we observe that the preceding inequality holds for all $\tau \in [\tilde{\tau}, \infty)$, where $\tilde{\tau} = \max(4, t_0)$.

We choose $\rho$ in such a way that $e^{\mathbf{c}\rho}= \tau ^{\frac{\vartheta^\ast}{2}}$, that is, $\rho=\mathbf{c}\ln \tau$. With this choice of $\rho$, the inequality above gives
\begin{equation}\label{wy3}
\mathbf{c}_0\|\widehat{U^2}\|_{L^2(\{|\xi|\le \mathbf{c}\ln \tau\})}\le \tau ^{-\frac{\vartheta^\ast}{2}}+e^{\mathbf{c}\tau }\|\Sigma_{V_2}-\Sigma_{V_1}\|.
\end{equation}
On the other hand, we verify that
\[
\|\widehat{U^2}\|_{L^2(\{|\xi|> \mathbf{c}\ln \tau\})}\le (\ln \tau)^{-\nu}\|U^2\|_{H^\nu (\mathbb{R})},
\]
which, in combination with \eqref{wy3}, yields
\[
\mathbf{c}_0\|U^2\|_{L^2(\mathbb{R})}=\mathbf{c}_0\|\widehat{U^2}\|_{L^2(\mathbb{R})}\le \tau ^{-\frac{\vartheta^\ast}{2}}+(\ln \tau)^{-\nu}+e^{\mathbf{c}\tau }\|\Sigma_{V_2}-\Sigma_{V_1}\|.
\]
Since the function $\tau\in [\tilde{\tau},\infty)\mapsto \tau ^{-\frac{\vartheta^\ast}{2}}(\ln \tau)^\nu$ is bounded, there exists $\tau_\ast\ge \tilde{\tau}$ such that, for all $\tau \in [\tau_\ast,\infty)$, we have
\begin{equation}\label{wy4}
\mathbf{c}_0\|U^2\|_{L^2(\mathbb{R})}\le (\ln \tau)^{-\nu}+e^{\mathbf{c}\tau }\|\Sigma_{V_2}-\Sigma_{V_1}\|.
\end{equation}

Define $H: [\tau_\ast,\infty[ \rightarrow ]0, \infty[: \tau \mapsto H(\tau)=(\ln \tau)^{-\nu}e^{-\mathbf{c}\tau}$, where $\mathbf{c}$ is the constant in \eqref{wy4}. We verify that $H$ is a decreasing bijection from $[\tau_\ast,\infty[$ onto $]0,a_\ast]$, where $a_\ast=H(\tau_\ast)$. If $\|\Sigma_{V_2}-\Sigma_{V_1}\|\le a_\ast$, then there exists $\tau\in [\tau_\ast,\infty[$ such that $H(\tau)=\|\Sigma_{V_2}-\Sigma_{V_1}\|$. That is, we have $(\ln \tau)^{-\nu}e^{-\mathbf{c}\tau}=\|\Sigma_{V_2}-\Sigma_{V_1}\|$, and hence 
\[
\ln \frac{1}{\|\Sigma_{V_2}-\Sigma_{V_1}\|^{\frac{1}{\mathbf{c}+\nu}}}\le \tau
\]

In addition, if  $\|\Sigma_{V_2}-\Sigma_{V_1}\|< e^{-(\mathbf{c}+\nu)}$, then 
\[
\ln \ln \frac{1}{\|\Sigma_{V_2}-\Sigma_{V_1}\|^{\frac{1}{\mathbf{c}+\nu}}}\le \ln \tau.
\]
Consequently, \eqref{wy4} implies
\begin{equation}\label{wy5}
\|U^2\|_{L^2(\mathbb{R})}\le \mathbf{c}_0\left(\ln \ln \frac{1}{\|\Sigma_{V_2}-\Sigma_{V_1}\|^{\frac{1}{\mathbf{c}+\nu}}}\right)^{-\nu}.
\end{equation}
Let $b_\ast:=\min(a_\ast, e^{-(\mathbf{c}+\nu)})$. In this case, when $\|\Sigma_{V_2}-\Sigma_{V_1}\|\ge  b_\ast$, we have trivially 
\begin{equation}\label{wy6}
\|U^2\|_{L^2(\mathbb{R})}\le \mathbf{c}_0\|\Sigma_{V_2}-\Sigma_{V_1}\|.
\end{equation}

Define
\begin{equation}\label{Psi}
\Psi(\rho):=\chi_{[b_\ast,\infty[}(\rho)\rho+ \chi_{]0, b_\ast[}(\rho)(\ln |\ln \rho|)^{-\nu},
\end{equation}
where $\chi_I$ denotes the characteristic function of the interval $I$.

In view of \eqref{wy5} and \eqref{wy6}, we have
\[
\|U^2\|_{L^2(\mathbb{R})}\le \mathbf{c}_0 \Psi(\|\Sigma_{V_2}-\Sigma_{V_1}\|).
\]

In summary, we have established the following result.

\begin{theorem}\label{thmst}
Let $V_1,V_2\in \mathscr{V}_\mu ^\ast$ such that $V_1-V_2=U^1(x)U^2(x')$, where $U^1\in L^2(I, \mathbb{R})$ satisfies $\pm U^1\ge 0$ and $\|U^1\|_{L^1(I)}\ne 0$ and $U^2\in H^\nu (\mathbb{R}, \mathbb{R})$ for some $\nu >0$ satisfies $\|U^2\|_{H^\nu (\mathbb{R})}\le \varkappa$. Then we have
\[
\|U^2\|_{L^2(\mathbb{R})}\le \mathbf{c}_0 \Psi(\|\Sigma_{V_2}-\Sigma_{V_1}\|),
\]
where $\Psi$ is given by \eqref{Psi}.
\end{theorem}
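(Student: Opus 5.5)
The plan is to quantify the uniqueness argument of Theorem \ref{thmu}. I use the CGO solutions of Theorem \ref{thmcgo} with $\xi\in\mathbb{R}$, and feed them into the integral identity \eqref{ii}. Three estimates are needed.

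\emph{Step 1 (decay of the remainder).} From the decay assumption \eqref{cos}, truncating at level $|\tau|^{1/4}$ gives $\|V_j^1\|_{L^{2\eta_2}(I,L^{2\eta_1}(I'))}\le \mathfrak{c}|\tau|^{-\vartheta/4}$. Combining this with \eqref{est1}, \eqref{est2} and $\|z\|_{L^2(Q)}\le e^{\mathbf{c}|\xi|}$, we get \eqref{wy0}: $\|v\|_{L^2(Q)}\le \mathbf{c}|\tau|^{-\vartheta^\ast}e^{\mathbf{c}|\xi|}$.

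\emph{Step 2 (growth of the CGO solutions).} Next I bound the solutions themselves by $e^{\mathbf{c}(|\tau|+|\xi|)}$ in $H^1(\Omega)$, which is \eqref{wy2}. The route is a mixed-norm bound \eqref{wy1}, followed by the $W^{2,\hat\beta}$ a priori estimate and the embedding $W^{2,\hat\beta}(\Omega)\hookrightarrow H^1(\Omega)$ guaranteed by \eqref{0.x.3}. The trace theorem then controls $\|u_j\|_{H^{1/2}(\Gamma)}$ by the same quantity.

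\emph{Step 3 (the basic inequality).} Insert $u_1$ (parameter $\tau,\xi$) and $\overline{u_2}$ (parameter $-\tau$, $\xi=0$) into \eqref{ii}. As in \eqref{eqq2}, this yields
\[
|\widehat{U^1}(i\xi)\widehat{U^2}(\xi)|\le \mathbf{c}\left(|\tau|^{-\vartheta^\ast}e^{\mathbf{c}|\xi|}+e^{\mathbf{c}(|\tau|+|\xi|)}\|\Sigma_{V_2}-\Sigma_{V_1}\|\right).
\]
Here $\|V\tilde u\|_{L^1}$ is bounded using $\|V\|_{L^2(\Omega)}$, which is finite and depends on $V_1$ through $\aleph$. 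The sign condition $\pm U^1\ge0$ gives, for real $\xi$ and $|\xi|\le\rho$, the lower bound
\[
|\widehat{U^1}(i\xi)|=\int_I e^{x\xi}|U^1|\,dx\ge e^{-\mathbf{c}\rho}\|U^1\|_{L^1(I)}.
\]
This factor is absorbed into the exponential terms. Squaring and integrating over $|\xi|\le\rho$ then bounds the low frequencies of $\widehat{U^2}$.

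\emph{Step 4 (optimization).} The high frequencies are controlled by the $H^\nu$ bound: $\|\widehat{U^2}\|_{L^2(|\xi|>\rho)}\le\rho^{-\nu}\varkappa$. Choosing $\rho=\mathbf{c}\ln\tau$ so that $e^{\mathbf{c}\rho}=\tau^{\vartheta^\ast/2}$, and using that $\tau^{-\vartheta^\ast/2}$ is dominated by $(\ln\tau)^{-\nu}$, we reach \eqref{wy4}:
\[
\|U^2\|_{L^2}\le \mathbf{c}_0\left((\ln\tau)^{-\nu}+e^{\mathbf{c}\tau}\|\Sigma_{V_2}-\Sigma_{V_1}\|\right).
\]
Next set $\tau=H^{-1}(\|\Sigma_{V_2}-\Sigma_{V_1}\|)$ with $H(\tau)=(\ln\tau)^{-\nu}e^{-\mathbf{c}\tau}$. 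This choice balances the two terms and gives $\tau\ge \ln\|\Sigma_{V_2}-\Sigma_{V_1}\|^{-1/(\mathbf{c}+\nu)}$, hence the double-logarithmic bound \eqref{wy5} for small data. For large data, $\|U^2\|_{L^2}\le\varkappa\le \mathbf{c}_0 b_\ast^{-1}\|\Sigma_{V_2}-\Sigma_{V_1}\|$ trivially. Together these give the estimate with $\Psi$.

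\emph{Main obstacle.} I expect the hardest step to be Step 2. The bound must be uniform, exponential in $|\tau|+|\xi|$, and in the $H^{1/2}(\Gamma)$ norm. It requires controlling $Vu$ in $L^{\hat\beta}$ through the mixed-norm estimate of Proposition \ref{proposition1} together with the Hölder exponents $\eta_1,\eta_2,\tilde\beta$. A related concern is keeping all constants independent of $\tau$, so that the uniformity needed in Step 4 holds.
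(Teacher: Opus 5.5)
Your proposal is correct and follows the paper's proof essentially step by step: the decay bound \eqref{wy0} from truncating at level $|\tau|^{1/4}$ under \eqref{cos}, the exponential growth bound \eqref{wy2} via \eqref{wy1} and the $W^{2,\hat\beta}$ a priori estimate, the identity \eqref{ii} applied to $u_1$ and $\overline{u_2}$, the frequency splitting at $|\xi|=\mathbf{c}\ln\tau$ with the $H^\nu$ tail bound, and the inversion of $H(\tau)=(\ln\tau)^{-\nu}e^{-\mathbf{c}\tau}$. The only differences are that you make explicit two points the paper leaves implicit: the lower bound $|\widehat{U^1}(i\xi)|\ge e^{-\mathbf{c}\rho}\|U^1\|_{L^1(I)}$ for $|\xi|\le\rho$, which comes from the sign condition, and the reason $\|U^2\|_{L^2(\mathbb{R})}\le\varkappa\le\varkappa\, b_\ast^{-1}\|\Sigma_{V_2}-\Sigma_{V_1}\|$ behind the large-data case.
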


\appendix
\section{Proof of Theorem \ref{thmci}}\label{appendixA}

We first prove a lemma that will be used to establish Theorem \ref{thmci}. To this end, recall that $I'=]a',b'[$ and $\ell'=b'-a'$. Define the operator $A'$ as follows
\[
D(A')=\{w\in H_0^1(I'); \; w''\in L^2(I')\},\quad A'w=-w''.
\]
Let $(\lambda_j^2)_{j\ge 1}$ be the nondecreasing sequence of eigenvalues of the operator $A'$ and recall that
\[
\lambda_j=\frac{j\pi}{\ell'},\quad j\in \mathbb{N}.
\]
Let $(\phi_j)_{j\in \mathbb{N}}$ be an orthonormal basis of $L^2(I')$ consisting of eigenfunctions such that $A'\phi_j=\lambda_j^2 \phi_j$, for all $j\in \mathbb{N}$. For all $\lambda \ge 0$, let $\pi_\lambda:L^2(I') \rightarrow L^2(I')$ be given as follows
\[
\pi_\lambda w:=\sum_{\lambda_j\in [\lambda,\lambda+1)}(w|\phi_j)\phi_j,\quad w\in L^2(I'),
\]
if $\{j\in \mathbb{N};\; \lambda_j\in [\lambda,\lambda+1)\}\ne \emptyset $, and $\pi_\lambda=0$ if $\{j\in \mathbb{N};\; \lambda_j\in [\lambda,\lambda+1)\}= \emptyset $. 

In the remaining part of this appendix, $\mathbf{c}=\mathbf{c}(\ell',q',\mu)>0$ will denote a generic constant.

\begin{lemma}\label{lem1}
For all $\lambda\ge 0$ and $w\in L^2(I')$ we have
\begin{align}
&\|\pi_\lambda w\|_{L^{q'}(I')}\le \mathbf{c}(1+\lambda)^{\theta}\|w\|_{L^2(I')}  ,\label{up4}
\\
&\|\pi_\lambda w\|_{L^2(I')}\le \mathbf{c}(1+\lambda)^{\theta}\|w\|_{L^q(I')},\label{up5}
\end{align}
where $\theta$ is given by \eqref{theta}.
\end{lemma}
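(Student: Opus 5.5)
The plan is to use the explicit form of the Dirichlet eigenfunctions,
\[
\phi_j(x')=\sqrt{2/\ell'}\,\sin\bigl(\lambda_j(x'-a')\bigr),
\]
which satisfy $\|\phi_j\|_{L^\infty(I')}\le \sqrt{2/\ell'}$. Since $\lambda_j=j\pi/\ell'$, the number $N_\lambda$ of indices $j$ with $\lambda_j\in[\lambda,\lambda+1)$ satisfies $N_\lambda\le 1+\ell'/\pi$. It is therefore bounded independently of $\lambda$.

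First we prove an $L^2\to L^\infty$ bound. Write $\pi_\lambda w=\sum_{j\in J_\lambda}(w|\phi_j)\phi_j$. By Cauchy--Schwarz and Bessel,
\[
\|\pi_\lambda w\|_{L^\infty(I')}\le \sum_{j\in J_\lambda}|(w|\phi_j)|\,\|\phi_j\|_{L^\infty}\le \sqrt{2/\ell'}\,N_\lambda^{1/2}\|w\|_{L^2(I')}\le \mathbf{c}\|w\|_{L^2(I')}.
\]
Since $I'$ is bounded, Hölder gives $\|\pi_\lambda w\|_{L^{q'}(I')}\le (\ell')^{1/q'}\|\pi_\lambda w\|_{L^\infty(I')}\le \mathbf{c}\|w\|_{L^2(I')}$. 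As $\theta>0$ by \eqref{theta}, we have $(1+\lambda)^\theta\ge 1$, and \eqref{up4} follows. The bound is in fact uniform in $\lambda$; the factor $(1+\lambda)^\theta$ is only a convenient weaker form for the later dyadic summation in the Carleman estimate.

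For \eqref{up5}, we argue by duality. The operator $\pi_\lambda$ is an orthogonal projection on $L^2(I')$, so it is self-adjoint, and for $w\in L^2(I')$ (dense in $L^q(I')$, since $q<2$ and $I'$ is bounded)
\[
\|\pi_\lambda w\|_{L^2}=\sup_{\|g\|_{L^2}\le 1}|(\pi_\lambda w|g)|=\sup_{\|g\|_{L^2}\le1}|(w|\pi_\lambda g)|\le \|w\|_{L^q}\sup_{\|g\|_{L^2}\le 1}\|\pi_\lambda g\|_{L^{q'}}.
\]
Applying \eqref{up4} to the last supremum gives \eqref{up5}. Alternatively, one can bound $|(w|\phi_j)|\le \|\phi_j\|_{L^{q'}}\|w\|_{L^q}\le \mathbf{c}\|w\|_{L^q}$ directly and sum over at most $N_\lambda$ terms. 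The estimate extends by density to all $w\in L^q(I')$ if needed.

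There is no real obstacle here. The only points needing care are the counting bound on $N_\lambda$, which is uniform in $\lambda$ because consecutive eigenvalue gaps equal $\pi/\ell'$, and keeping track that the constants depend only on $\ell'$ and $q'$, and hence are allowed as $\mathbf{c}(\ell',q',\mu)$.
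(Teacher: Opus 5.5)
Your proof is correct. For \eqref{up5} it is the same duality argument as the paper's: the paper writes $\|\pi_\lambda w\|_{L^2(I')}^2=(\pi_\lambda w|w)\le \|\pi_\lambda w\|_{L^{q'}(I')}\|w\|_{L^q(I')}$ and then invokes \eqref{up4}, which is your supremum formulation in another form.

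For \eqref{up4} you take a genuinely different route. The paper uses neither the explicit eigenfunctions nor any eigenvalue counting. It sets $p'=\mu q'$ and interpolates $\|\pi_\lambda w\|_{L^{q'}}\le \|\pi_\lambda w\|_{L^2}^{1-\theta}\|\pi_\lambda w\|_{L^{p'}}^{\theta}$; the relation $\frac{1}{q'}=\frac{1-\theta}{2}+\frac{\theta}{p'}$ is exactly what defines $\theta$ in \eqref{theta}. It then controls the $L^{p'}$ norm through $H_0^1(I')\hookrightarrow L^\infty(I')$ and Poincar\'e, together with $\|(\pi_\lambda w)'\|_{L^2}\le (1+\lambda)\|w\|_{L^2}$. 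The factor $(1+\lambda)^\theta$ is the price of that Sobolev step.

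You instead exploit the one-dimensional structure. The eigenfunctions are uniformly bounded by $\sqrt{2/\ell'}$, and this holds for any unimodular normalization because Dirichlet eigenvalues on an interval are simple. Each window $[\lambda,\lambda+1)$ contains at most $1+\ell'/\pi$ eigenvalues. Together these give a $\lambda$-uniform $L^2\to L^\infty$ bound whose constant does not depend on $\mu$, which is strictly stronger than the lemma.

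The paper's method is softer and carries over the spectral-cluster approach of \cite{DKS}, where uniform bounds of this kind are unavailable. It also produces exactly the exponent that is later matched with Hardy--Littlewood--Sobolev. Yours is sharper but depends on the explicit sine basis.

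One small correction: the later summation in the appendix runs over unit windows $k\le\lambda_j<k+1$, not dyadic ones. There the weights $(1+k)^{2\theta}k^{-1}\lesssim k^{2\theta-1}$ are what generate the kernel $|t-s|^{-2\theta}$. With your bound you would simply use $1\le (1+k)^{2\theta}$.
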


\begin{proof}
Let $p'=\mu q'$, $\lambda\ge 0$ and $w\in L^2(I')$. Using 
\[
\|\pi_\lambda w\|_{L^2(I')}\le \|w\|_{L^2(I')},\quad \|\nabla \pi_\lambda w\|_{L^2(I')}\le (1+\lambda) \|w\|_{L^2(I')},
\]
$\pi_\lambda w\in H_0^1(I')\hookrightarrow L^\infty (I')$ and an elementary interpolation inequality, we obtain
\begin{align*}
\|\pi_\lambda w\|_{L^{q'}(I')} &\le \|\pi_\lambda w\|_{L^2(I')}^{1-\theta}\|\pi_\lambda w\|_{L^{p'}(I')}^\theta
\\
&\le \mathbf{c} \|\pi_\lambda w\|_{L^2(I')}^{1-\theta}\|\nabla \pi_\lambda w\|_{L^2(I')}^\theta
\\
&\le \mathbf{c}(1+\lambda)^\theta \|\pi_\lambda w\|_{L^2(I')}.
\end{align*}
In other words, we proved \eqref{up4}. As $\pi_\lambda$ is self-adjoint and $\pi_\lambda ^2=\pi_\lambda$, we have
 \[
\|\pi_\lambda w\|_{L^2(I')}^2=(\pi_\lambda w|\pi_\lambda w)=(\pi_\lambda^2w|w)=(\pi_\lambda w|w), 
\]
which, in combination with H\"older's inequality, gives
 \[
\|\pi_\lambda w\|_{L^2(I')}^2\le \|\pi_\lambda w\|_{L^{q'}(I')} \|\pi_\lambda w\|_{L^q(I')}. 
\]
This and \eqref{up4} implies \eqref{up5}.
\end{proof}

\begin{proof}[Proof of Theorem \ref{thmci}]

Let $|\tau|\in [4,\infty )\setminus \{\lambda_j;\; j\in \mathbb{N}\}$, $u\in C_0^\infty(\mathbb{R}^2)$ with $\mathrm{supp}(u)\subset Q$. The following identities enable us to reduce the proof to the case $\tau \ge 4$: $\|u\|_{L^\gamma(\mathbb{R},L^{q'}(I'))}=\|u(-\cdot,\cdot)\|_{L^\gamma(\mathbb{R},L^{q'}(I'))}$ and
\[
\|(e^{-\tau t}\Delta e^{\tau t})u\|_{L^\beta(\mathbb{R}, L^q(I'))}=\|(e^{\tau t}\Delta e^{-\tau t})u(-\cdot,\cdot)\|_{L^\beta(\mathbb{R}, L^q(I'))}.
\] 
Set
\[
f:=e^{\tau t} (\partial_t^2+\partial_{x'}^2) e^{-\tau t}u=(\partial_t^2-2\tau \partial_t +\tau^2 +\partial_{x'}^2)u,
\]
and let $\mathcal{F}$ denote the Fourier transform with respect to $t$, and $\xi \in \mathbb{R}$.  Let 
\[
p_j: L^{q'}(I')\rightarrow L^{q'}(I'): w \mapsto (w|\phi_j)\phi_j.
\]
Since 
\[
p_j(\partial_{x'}^2u)=-\lambda_j^2 p_j u, 
\]
we obtain
\begin{align*}
\mathcal{F}(p_jf(\cdot,x'))(\xi)&= ((i\xi)^2-2i\tau \xi +\tau^2-\lambda_j^2)\mathcal{F}(p_ju(\cdot,x'))(\xi)
\\
&=((i\xi-\tau)^2-\lambda_j^2)\mathcal{F}(p_ju(\cdot,x'))(\xi)
\\
&=(i\xi-\tau-\lambda_j)(i\xi-\tau+\lambda_j)\mathcal{F}(p_ju(\cdot,x'))(\xi).
\end{align*}

In the following, $t\in \mathbb{R}$ and $x'\in I'$. Since $\tau \ne \lambda_j$ for all $j\ge 1$, we obtain
\begin{align*}
p_ju (t,x')&=\frac{1}{2\pi}  \int_\mathbb{R}\frac{e^{it\xi}}{(i\xi-\tau-\lambda_j)(i\xi-\tau+\lambda_j)}\mathcal{F}(p_jf(\cdot,x'))(\xi)d\xi
\\
&= \frac{1}{2\pi}  \int_{\mathbb{R}}\int_\mathbb{R}\frac{e^{i(t-s)\xi}}{(i\xi-\tau-\lambda_j)(i\xi-\tau+\lambda_j)}p_jf(s,x'))d\xi ds.
\end{align*}
Hence,
\[
p_ju (t,x')=\int_{\mathbb{R}}m_j^\tau(t-s)p_jf(s,x')ds,
\]
where
\[
m_j^\tau(\eta):=\frac{1}{2\pi}  \int_\mathbb{R}\frac{e^{i\eta\xi}}{(i\xi-\tau-\lambda_j)(i\xi-\tau+\lambda_j)}d\xi.
\]
The following inequality
\[
\|u(t,\cdot)\|_{L^{q'}(I')}=\left\|\sum_{k\ge 0}\pi_k^2u(t,\cdot)\right\|_{L^{q'}(I')}\le \sum_{k\ge 0}\|\pi_k^2u(t,\cdot)\|_{L^{q'}(I')},
\]
combined with \eqref{up4}, implies
\begin{equation}\label{eq1}
\|u(t,\cdot)\|_{L^{q'}(I')}\le \mathbf{c}\sum_{k\ge 0}(1+k)^{\theta} \|\pi_k u(t,\cdot)\|_{L^2(I')}.
\end{equation}

On the other hand, we have
\begin{align*}
\|\pi_k u(t,\cdot)\|_{L^2(I')}^2&=\sum_{k\le \lambda_j<k+1}|(u(t,\cdot)|\phi_j)|^2
\\
&=\sum_{k\le \lambda_j<k+1}\left|\int_{\mathbb{R}} m_j^\tau(t-s)(f(s,\cdot)|\phi_j)ds \right|^2.
\end{align*}
Applying Minkowski's inequality, we obtain
\begin{align*}
\|\pi_k u(t,\cdot)\|_{L^2(I')}&\le \int_{\mathbb{R}}\left(\sum_{k\le \lambda_j<k+1}|m_j^\tau(t-s)(f(s,\cdot)|\phi_j)|^2\right)^{\frac{1}{2}}ds
\\
&\le \int_{\mathbb{R}}\max_{k\le \lambda_j<k+1}|m_j^\tau(t-s)|\left(\sum_{k\le \lambda_j<k+1}|(f(s,\cdot)|\phi_j)|^2\right)^{\frac{1}{2}}ds
\\
&\le \int_{\mathbb{R}}\max_{k\le \lambda_j<k+1}|m_j^\tau(t-s)|\|\pi_kf(s,\cdot)\|_{L^2(I')}ds,
\end{align*}
which, in light of \eqref{up5}, gives
\[
\|\pi_k u(t,\cdot)\|_{L^2(I')}\le \mathbf{c}\int_{\mathbb{R}}(1+k)^\theta\max_{k\le \lambda_j<k+1}|m_j^\tau(t-s)|\|f(s,\cdot)\|_{L^p(I')}ds.
\]
This  in \eqref{eq1} yields
\[
\|u(t,\cdot)\|_{L^{q'}(I')}\le \mathbf{c}\sum_{k\ge 0}(1+k)^{2\theta} \int_{\mathbb{R}}\max_{k\le \lambda_j<k+1}|m_j^\tau(t-s)|\|f(s,\cdot)\|_{L^{p}(I')}ds,
\]
which we rewrite as
\begin{equation}\label{eq2}
\|u(t,\cdot)\|_{L^{q'}(I')}\le \mathbf{c}\sum_{k\ge 0}A_k(t),
\end{equation}
where
\[
A_k(t):=(1+k)^{2\theta} \int_{\mathbb{R}}\max_{k\le \lambda_j<k+1}|m_j^\tau(t-s)|\|f(s,\cdot)\|_{L^q(I')}ds,\quad k\ge 0.
\]

It follows from \cite[Lemma 2.3]{DKS} that 
\[
|m_j^\tau(t-s)|\le \frac{1}{\lambda_j}e^{-|\tau -\lambda_j||t-s|},\quad j\ge 1.
\]

For $1\le k\le \lfloor \tau\rfloor-2$, we have
\[
\max_{k\le \lambda_j<k+1}|m_j^\tau(t-s)|\le \frac{1}{k}\max_{k\le \lambda_j<k+1}e^{-(\tau -\lambda_j)|t-s|}\le \frac{e^{-(\tau-1-k)|t-s|}}{k}.
\]
Hence,
\begin{align*}
\sum_{k=2}^{\lfloor \tau\rfloor-2}A_k(t) &\le  \sum_{k=2}^{\lfloor \tau\rfloor-2} (1+k)^{2\theta}k^{-1}\int_{\mathbb{R}}e^{-(\tau-1-k)|t-s|}\|f(s,\cdot)\|_{L^q(I')}ds
\\
&\le 2^{2\theta}\sum_{k=2}^{\lfloor \tau\rfloor-2} k^{2\theta-1}\int_{\mathbb{R}}e^{-(\tau-1-k)|t-s|}\|f(s,\cdot)\|_{L^q(I')}ds.
\end{align*}

As $2\theta-1<0$, we proceed as in \cite{DKS} to obtain
\[
\sum_{k=2}^{\lfloor \tau\rfloor-2}A_k(t)\le  2^{2\theta}\int_{\mathbb{R}}\|f(s,\cdot)\|_{L^q(I')}\int_1^{\lfloor \tau\rfloor-2}r^{2\theta-1}e^{-(\lfloor \tau\rfloor-2-r)|t-s|}drds.
\]
Then a change of variable yields
\begin{equation}\label{eq5}
\sum_{k=2}^{\lfloor \tau\rfloor-2} A_k(t)
 \le \mathbf{c}\int_{\mathbb{R}}|t-s|^{-2\theta}\|f(s,\cdot)\|_{L^q(I')}ds.
\end{equation}

Next, we discuss the case $k\ge \lfloor \tau\rfloor+2$, for which we have
\[
\max_{k\le \lambda_j<k+1}|m_j^\tau(t-s)|\le \frac{1}{k}\max_{k\le \lambda_j<k+1}e^{-(\lambda_j-\tau)|t-s|}\le\frac{e^{-(k-\tau)|t-s|}}{k}.
\]
Define
\[
h(\rho)=\rho^{2\theta-1}e^{-(\rho-\tau)|t-s|},\quad \rho\ge \lfloor \tau\rfloor+2.
\]
Since
\[
h'(\rho)=\left[-\alpha -|t-s|\rho\right] \rho^{2\theta-2} e^{-(\rho-\tau)|t-s|}\le 0,\quad \rho\ge \lfloor \tau\rfloor+1,
\]
we get
\begin{equation}\label{eq6}
\sum_{k\ge  \lfloor \tau\rfloor+2}A_k(t)
  \le \mathbf{c} \int_{\mathbb{R}}J_\tau(|t-s|)\|f(s,\cdot)\|_{L^q(I')}ds,
\end{equation}
where
\[
J_\tau(|t-s|)=\int_{ \lfloor \tau\rfloor+1}^\infty \rho^{2\theta-1}e^{-(\rho-\tau)|t-s|}d\rho.
\]
If $(\lfloor \tau\rfloor+1)|t-s|<1$, then 
\begin{align*}
J_\tau(|t-s|)&\le |t-s|^{-2\theta}\int_{ (\lfloor \tau\rfloor+1)|t-s|}^\infty r^{2\theta-1}e^{-(r-\tau |t-s|)}dr
\\
&\le |t-s|^{-2\theta}\left(\int_0^1r^{2\theta-1}d\rho+\int_1^\infty e^{-r+1}dr\right)
\\
&\le \mathbf{c} |t-s|^{-2\theta}.
\end{align*}
When $(\lfloor \tau\rfloor+1)|t-s|\ge 1$, we have
\begin{align*}
J_\tau(|t-s|)&\le |t-s|^{-2\theta}\int_{ (\lfloor \tau\rfloor+1)|t-s|}^\infty e^{-(r-\tau |t-s|)}dr
\\
&=  |t-s|^{-2\theta} \int_{(\lfloor \tau\rfloor+1-\tau)|t-s|}^\infty e^{-r}dr
\\
&\le  |t-s|^{-2\theta}\int_0^\infty e^{-r}dr
\\
&\le \mathbf{c} |t-s|^{-2\theta}.
\end{align*}
The preceding inequalities in \eqref{eq6} yield
\begin{equation}\label{eq7}
\sum_{k\ge  \lfloor \tau\rfloor+2}A_k(t) 
 \le \mathbf{c} \int_{\mathbb{R}}|t-s|^{-2\theta}\|f(s,\cdot)\|_{L^q(I')}ds.
\end{equation}

When $k=0$, we have from \cite[Lemma 2.3]{DKS} 
\[
\max_{0\le \lambda_j<1}|m_j^\tau (t-s)|\le e^{-\frac{\tau}{2}|t-s|}\le e^{-2|t-s|}.
\]
As $\sup_{\eta >0}\eta^{2\theta}e^{-2\eta}<\infty$, we obtain
\begin{equation}\label{eq11}
A_0(t)\le \int_{\mathbb{R}}|t-s|^{-2\theta}\|f(s,\cdot)\|_{L^q(I')}ds.
\end{equation}
While for $k=1$, since
\[
\max_{1\le \lambda_j<2}|m_j^\tau (t-s)|\le e^{-(\tau-2)|t-s|}\le e^{-(\tau-2)|t-s|}\le e^{-2|t-s|} ,
\]
proceeding similarly as for $A_0(t)$, we get
\begin{equation}\label{eq11.1}
A_1(t)\le \int_{\mathbb{R}}|t-s|^{-2\theta}\|f(\cdot,s)\|_{L^q(I')}ds.
\end{equation}

Let us now consider the remaining case $k\in \{\lfloor \tau\rfloor-1,\lfloor \tau\rfloor, \lfloor \tau\rfloor+1\}$, for which we have
\[
\max_{k\le \lambda_j<k+1}|m_j^\tau(t-s)|\le 1.
\]
Therefore, we have
\begin{equation}\label{eq12}
A_k(t)\le \int_{\mathbb{R}}\|f(s,\cdot)\|_{L^q(I')}ds.
\end{equation}

Putting together \eqref{eq2}, \eqref{eq5}, \eqref{eq7}, \eqref{eq11}, \eqref{eq11.1} and \eqref{eq12}, we end up getting
\begin{equation}\label{eq8}
\|u(t,\cdot)\|_{L^{q'}(I')}\le \mathbf{c}\left( \int_{\mathbb{R}}|t-s|^{-2\theta}\|f(s,\cdot)\|_{L^q(I')}ds+\int_{\mathbb{R}}\|f(s,\cdot)\|_{L^q(I')}ds\right).
\end{equation}

Finally, applying Hardy-Littlewood-Sobolev's inequality to the first term of the right hand side of \eqref{eq8}, we get
\[
\|u\|_{L^\gamma(\mathbb{R},L^{q'}(I'))}\le \mathbf{c}(1+\ell^{\frac{1}{\beta'}+\frac{1}{\gamma}})\|(e^{\tau t}\Delta e^{-\tau t})u\|_{L^\beta(\mathbb{R},L^q(I'))}.
\]
We complete the proof by noting that the preceding inequality remains true by continuity for all $\tau \ge 4$.
\end{proof}

\end{document}